\newtheorem{theorem}{Theorem}[section]
\newtheorem{proposition}[theorem]{Proposition}
\newtheorem{lemma}[theorem]{Lemma}
\newtheorem{corollary}[theorem]{Corollary}
\newtheorem{remark}[theorem]{Remark}
\newcommand{\C}{\mathbb{C}}
\newcommand{\Z}{\mathbb{Z}}
\newcommand{\A}{\mathcal{A}}
\newcommand{\R}{\mathbb{R}}
\newcommand{\eps}{\varepsilon}
\newcommand{\X}{\mathcal{X}}
\newcommand{\Q}{\mathbb{Q}}
\newcommand{\N}{\mathbb{N}}
\newcommand{\LL}{\mathcal{L}}
\begin{document}
\title{Theta invariants and Lattice-Point Counting in Normed $\Z$-Modules}

\author{Mounir Hajli}
\thanks{ }

\address{School of Science, Westlake University}

\email{hajli@westlake.edu.cn}

            \maketitle 
            
\begin{abstract}

 Euclidean lattices occupy a central position in number theory, the geometry of numbers, and modern cryptography. In the present article, the theory of Euclidean lattices is employed to investigate normed $\mathbb{Z}$-modules of finite rank. Specifically, let $\overline{E}$ be a normed $\mathbb Z$-module of finite rank. We establish several  inequalities for the lattice-point counting function of $\overline{E}$, along with related results. Our arguments rely primarily on the analytic properties of the theta series associated with Euclidean lattices.\\

\end{abstract}

{\small{
\begin{center}
MSC: 14G40  \\

Keywords: Euclidean lattice; Theta invariants; Normed $\Z$-modules. \end{center}
}}

\tableofcontents

\section{Introduction}

 Euclidean lattices are fundamental objects in number theory and the geometry of numbers, as extensively studied in works such as \cite{Bana, BostTheta, Conway, Gaudron2013, Grayson1984, Hermite1850, Stuhler1976}. In recent decades, they have also gained significant attention in cryptography, see  \cite{LLL, MG2002, MR2009}.
Statements that relate the geometric invariants of a Euclidean lattice and its dual lattice are traditionally referred to as \emph{transference theorems}. In his seminal work \cite{Bana}, Banaszczyk established remarkable transference inequalities involving the successive minima and the covering radius of Euclidean lattices. His  approach relies on the analytical properties of the theta  series \(\theta_{\overline{E}}\) associated with Euclidean lattices \(\overline{E}\), as well as on the Poisson summation formula (see Sections \ref{sec2} and \ref{tt} for detailed definitions of these concepts). 
The functions $\theta_{\overline{E}}$  play a central role in the refined analysis of general Euclidean lattices, as demonstrated by Banaszczyk's method.

\

Let \(\overline{E} = (E, \|\cdot\|)\) be a normed \(\mathbb{Z}\)-module of finite rank \(n\). We define \(\hat{h}^0(\overline{E})\) as the real number given by
\[
\hat{h}^0(\overline{E}) = \log \# \{ s \in E \mid \|s\| \leq 1 \}.
\]
This invariant, \(\hat{h}^0(\overline{E})\), plays a pivotal role in Arakelov geometry. It serves as an arithmetic analogue to the dimension of the space of sections of vector bundles over algebraic curves. 
 We further define \(\hat{h}^1(\overline{E}) := \hat{h}^0(\overline{E}^\vee)\), where \(\overline{E}^\vee\) denotes the dual of the normed \(\mathbb{Z}\)-module \(\overline{E}\). Additionally, we consider the arithmetic degree of \(\overline{E}\), denoted by \(\widehat{\deg}(\overline{E})\); for more details on these invariants, see Section \ref{sec2}.

Gillet and Soul\'e \cite{Soule-lattice} established an arithmetic analogue of the geometric Riemann-Roch theorem for curves. This can be formulated as follows:
\begin{equation}\label{ARR}
-\log(6) \cdot n \leq \hat{h}^0(\overline{E}) - \hat{h}^1(\overline{E}) - \widehat{\deg}(\overline{E}) \leq \log\left(\frac{3}{2}\right) \cdot n + 2\log n!.
\end{equation}

 Consequently, they demonstrated that the number of lattice points in a symmetric convex body is essentially determined by its successive minima, modulo a function that depends only on the rank of the convex body. This result has significant applications in Arakelov geometry and number theory. Henk \cite{Henk} presented a remarkably simple proof of a result due to  Gillet and Soul\'e, which relates the number of lattice points in a symmetric convex body to its successive minima.\\

 In this paper, we present an alternative approach to studying lattice points.
 We establish several  inequalities for the lattice-point counting function of $\overline{E}$, along with related results.
Our bounds are somewhat coarser compared to those given by the Gillet-Soul\'e theorem. This is primarily because the  proof  of Gillet and Soul\'e employs a difficult result due to Bourgain and Milman \cite{Bourgain-Milman} which gives a sharp lower bound for the product of the volumes of the unit balls associated with a normed $\Z$-module and its dual.   Nevertheless, as noted by Boucksom in \cite{Boucksom-lattice}, these coarser  bounds remain sufficient for various applications. \\

Our approach is rooted in the theory of Euclidean lattices, more particularly on 
theta series associated with Euclidean lattices. We define \({h}_\theta^0(\overline{E})\) as the real number given by:
\[
h_\theta^0(\overline{E}) = \log \sum_{v \in E} e^{-\pi \|v\|^2}.
\]
We call it \emph{the theta invariant} of $\overline E$.
We define ${h}_\theta^1(\overline{E}) := {h}_\theta^0(\overline{E}^\vee)$, where $\overline{E}^\vee$ denotes the dual of the Euclidean lattice $\overline{E}$ (see  Section \ref{tt} for more details about these invariants).

A pivotal result in the theory of Euclidean lattices asserts that $h_\theta^0(\overline{E})$ and $\hat{h}^0(\overline{E})$ are essentially equal. More precisely, the difference $h_\theta^0(\overline{E}) - \hat{h}^0(\overline{E})=O(n\log n)$ depends only on  $n$, the rank of  ${E}$ (see Proposition \ref{strictineq1}). This result plays a crucial role in this paper. \\

Next, we exhibit briefly a class of normed \(\mathbb{Z}\)-modules and Euclidean lattices that arise naturally in Arakelov geometry. For an introduction to Arakelov geometry, see \cite{Soule}. 
Let $\X$ be a projective, integral, and flat scheme of dimension $n+1$ over $\mathbb{Z}$. Such schemes are referred to as arithmetic varieties over $\mathbb{Z}$. Assume $\X$ is an arithmetic variety over $\mathbb{Z}$ of dimension $n+1$. We assume that the generic fibre $\X_\Q$ is smooth. Let $\overline{\LL} = (\LL, \|\cdot\|_{\overline{\LL}})$ be a smooth Hermitian line bundle on $\X$. 

For any $k \in \mathbb{N}$, we write $k\overline{\LL} := {\overline{\LL}}^{\otimes k}$, and denote by $n_k$ the rank of $H^0(\X, k\LL)$. We set $X := \X(\mathbb{C})$ and $L := \mathcal{L}(\mathbb{C})$. Let $\mu$ be a smooth volume form on $\X$. The space of global sections $H^0(X, L)$ is equipped with the $L^2$-norm:
\[
\|s\|_{L^2, \overline{\LL}}^2 := \int_X \|s(x)\|_{\overline{\LL}}^2 \mu \quad \text{for any } s \in H^0(X, L).
\]
Additionally, we consider the supremum norm, defined as:
\[
\|s\|_{\sup, \overline{\LL}} := \sup_{x \in X} \|s(x)\|_{\overline{\LL}} \quad \text{for any } s \in H^0(X, L).
\]

Thus, we obtain 
two normed $\mathbb{Z}$-modules: $\overline{H^0(\X, \LL)}_{L^2, \overline{\LL}} = (H^0(\X, \LL), \|\cdot\|_{L^2, \overline{\LL}})$, which is Euclidean, and $\overline{H^0(\X, \LL)}_{\sup, \overline{\LL}} = (H^0(\X, \LL), \|\cdot\|_{\sup, \overline{\LL}})$. 
Elements of \(\overline{H^0(\X, \LL)}_{\sup, \overline{\LL}}\) with norm less than or equal to \(1\) are called  \emph{small sections}.

 The arithmetic volume $\widehat{\mathrm{vol}}(\overline{\LL})$ for a Hermitian line bundle $\overline{\LL}$ on an arithmetic variety is a fundamental invariant in Arakelov geometry. It was introduced by Moriwaki in \cite{Moriwaki1} as an analogue of the geometric volume function. Roughly speaking, this invariant measures the growth of the number of small sections of $k\overline{\LL}$ as $k \to \infty$. Yuan \cite{YuanInventiones} studied the bigness property of Hermitian line bundles on arithmetic varieties. 
The  arithmetic volume function of $\overline \LL$ is defined as follows:
\[
\widehat{\mathrm{vol}}(\overline{{\mathcal{L}}}):=
\underset{k\rightarrow \infty}{\limsup} \frac{\hat{h}^0(\overline{H^0(\X,k\LL)}_{(\sup,k\overline \LL)})
}{k^{n+1}/(n+1)!}.\]

\

The following theorem is our main result. We shall show that it is essentially a consequence of the theory of Euclidean lattices.

\begin{theorem}\label{main}
Let $\overline{E}$ be a normed $\mathbb{Z}$-module of rank $n$. Then the following inequality holds:
\[
\begin{split}
- n \log n + \log\left(1 - \tfrac{1}{2\pi}\right) - \pi 
\leq \widehat{h}^0(\overline{E}) - \widehat{h}^1(\overline{E}) - \widehat{\deg}(\overline{E}) 
\leq n \log n + \pi - \log\left(1 - \tfrac{1}{2\pi}\right).
\end{split}
\]
\end{theorem}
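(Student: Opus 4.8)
The strategy is to replace the arithmetic invariants $\widehat{h}^0$ and $\widehat{h}^1$ by their theta-analogues $h_\theta^0$ and $h_\theta^1$, prove the corresponding identity (or sharp inequality) for the theta invariants, and then transfer back using the comparison result (Proposition \ref{strictineq1}) that controls $h_\theta^0(\overline{E}) - \widehat{h}^0(\overline{E})$ in terms of $n$ alone. Thus I would first establish a clean \emph{theta Riemann--Roch} statement of the form
\[
h_\theta^0(\overline{E}) - h_\theta^1(\overline{E}) = \widehat{\deg}(\overline{E}) + c(n),
\]
where $c(n)$ is an explicit error term, ideally small. The natural tool here is the Poisson summation formula: for a Euclidean lattice $\overline{E}$ of rank $n$ and covolume $\mathrm{covol}(\overline{E}) = e^{-\widehat{\deg}(\overline{E})}$, one has
\[
\sum_{v \in E} e^{-\pi\|v\|^2} \;=\; \frac{1}{\mathrm{covol}(\overline{E})}\sum_{w \in E^\vee} e^{-\pi\|w\|^2},
\]
which is exactly the statement $h_\theta^0(\overline{E}) = \widehat{\deg}(\overline{E}) + h_\theta^0(\overline{E}^\vee) = \widehat{\deg}(\overline{E}) + h_\theta^1(\overline{E})$. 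So for \emph{Euclidean} $\overline{E}$ the theta Riemann--Roch is an exact equality with no error term at all.

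The remaining issue is that $\overline{E}$ is only a normed $\Z$-module, i.e. the norm $\|\cdot\|$ need not come from an inner product. I would handle this by a John-ellipsoid type comparison: choose a Euclidean norm $\|\cdot\|_2$ on $E_\R$ with $\|\cdot\|_2 \le \|\cdot\| \le \sqrt{n}\,\|\cdot\|_2$ (John's theorem for the symmetric convex body $\{\|\cdot\|\le 1\}$). Write $\overline{E}_2$ for the resulting Euclidean lattice. Monotonicity of the theta sum in the norm gives $h_\theta^0(\overline{E}_2 (\sqrt n)) \le h_\theta^0(\overline{E}) \le h_\theta^0(\overline{E}_2)$, and dually $h_\theta^0(\overline{E}_2^\vee(1/\sqrt n)) \le h_\theta^1(\overline{E}) \le h_\theta^0(\overline{E}_2^\vee)$; here rescaling a Euclidean lattice by $\lambda$ changes $h_\theta^0$ by a controlled amount. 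Combining the Euclidean identity for $\overline{E}_2$ with these sandwich bounds and tracking the scaling factors yields $h_\theta^0(\overline{E}) - h_\theta^1(\overline{E}) - \widehat{\deg}(\overline{E}) = O(\text{something in } n)$ — and one should check that the contribution of the $\sqrt n$ distortion, once fed through the theta sums, is what produces the $n\log n$ terms. Alternatively, and perhaps more cleanly, I would bound $h_\theta^0(\overline{E}) - h_\theta^1(\overline{E}) - \widehat{\deg}(\overline{E})$ directly by a single application of Poisson summation to a Gaussian adapted to $\|\cdot\|$, estimating the Fourier-transform side against the dual norm and absorbing the constants $\log(1-\tfrac{1}{2\pi})$ and $\pi$ from the precise form of the Gaussian integral.

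Finally I would pass from $h_\theta$ to $\widehat{h}$: apply Proposition \ref{strictineq1} to both $\overline{E}$ and $\overline{E}^\vee$ to get $|\,h_\theta^0(\overline{E}) - \widehat{h}^0(\overline{E})\,| $ and $|\,h_\theta^1(\overline{E}) - \widehat{h}^1(\overline{E})\,|$ bounded by an explicit function of $n$, then add these to the theta Riemann--Roch inequality. The bookkeeping of constants must be arranged so that the total error collapses to exactly $n\log n + \pi - \log(1-\tfrac{1}{2\pi})$ on each side; this is where the specific shape of the claimed bound will force the precise version of Proposition \ref{strictineq1} (and of the monotonicity/scaling lemmas) that must be used.

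\textbf{Main obstacle.} The hard part will not be Poisson summation — which is essentially the whole content in the Euclidean case — but rather the passage from a general norm to a Euclidean one while keeping the error term as tight as the stated $\pm(n\log n + \text{const})$: naively invoking John's theorem costs a factor $\sqrt n$ on the norm, and one must verify that this propagates to only an $n\log n$ (not worse) defect in the theta invariants, and that the additive constants $-\pi$ and $\log(1-\tfrac1{2\pi})$ genuinely come out of the elementary Gaussian estimate $\sum_{k\in\Z}e^{-\pi t k^2} \ge 1 + 2e^{-\pi t} \ge \cdots$ rather than being artifacts one has to fight. Getting these constants exactly right, and confirming the symmetry of the two-sided bound, is the delicate bookkeeping step.
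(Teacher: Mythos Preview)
Your plan is essentially the paper's: Poisson summation gives the exact theta Riemann--Roch $h_\theta^0-h_\theta^1=\widehat{\deg}$ for Euclidean lattices, Proposition~\ref{strictineq1} converts $h_\theta^0$ to $\widehat h^0$ (this is packaged as Proposition~\ref{weakGS}), and John's theorem handles the passage from a general norm to a Euclidean one at the cost of a $\sqrt n$ distortion.

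There is one genuine slip in your final step. You propose to ``apply Proposition~\ref{strictineq1} to both $\overline{E}$ and $\overline{E}^\vee$'' after having established a theta Riemann--Roch inequality for the non-Euclidean $\overline{E}$. But Proposition~\ref{strictineq1} is stated and proved only for \emph{Euclidean} lattices: its lower bound relies on the identity \eqref{thetaincreases}, i.e.\ on Poisson summation, which has no analogue for a general norm. (The upper bound $\widehat h^0\le h_\theta^0+\pi$ does hold for any norm, but the lower bound does not.) The paper sidesteps this by never introducing $h_\theta^0(\overline{E})$ for the non-Euclidean $\overline{E}$ at all: it sandwiches $\widehat h^0(\overline{E})$, $\widehat h^1(\overline{E})$, and $\widehat\chi(\overline{E})$ directly between their values on the Euclidean lattices $\overline{E}_J$ and $(\overline{E}_J)_{1/\sqrt n}$, and then applies Proposition~\ref{weakGS} to those. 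Once you reorganize your argument this way the constants fall out exactly as stated: a single use of the lower bound in Proposition~\ref{strictineq1} contributes $\tfrac{n}{2}\log n$ and the additive $\log(1-\tfrac{1}{2\pi})$, a single use of the upper bound contributes the $\pi$, and the John distortion in $\widehat\chi$ contributes the remaining $\tfrac{n}{2}\log n$. Your ``alternative'' route via Poisson summation against a Gaussian adapted to the non-Euclidean $\|\cdot\|$ should be discarded---there is no such formula.
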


From this, we immediately obtain the asymptotic estimate
\[
\widehat{h}^0(\overline{E}) - \widehat{h}^1(\overline{E}) - \widehat{\deg}(\overline{E}) = O(n \log n),
\]

where the error term $O(n \log n)$ depends only on the rank $n$ of the lattice $E$. In view of \eqref{ARR}, we may interpret Theorem~\ref{main} as an arithmetic Riemann--Roch theorem for normed $\mathbb{Z}$-modules.\\

\vskip 0.5cm

A particularly important class of Euclidean lattices consists of those lattices $\overline{E}$ for which the underlying $\mathbb{Z}$-module $E$ admits a basis that is orthogonal with respect to the scalar product on $\overline{E}$. Such lattices arise naturally in the arithmetic geometry of toric varieties, as we now recall.

Let $\mathcal{X}$ be a smooth toric variety over $\mathrm{Spec}(\mathbb{Z})$, equipped with an action of the torus $\mathbb{T}$. Then $\mathcal{X}$ is determined by a complete nonsingular fan in the real vector space $N \otimes_\mathbb{Z} \mathbb{R} \simeq \mathbb{R}^n$, where $N$ is a free $\mathbb{Z}$-module of rank $n$. Denote by $M = N^\vee$ the dual $\mathbb{Z}$-module, and let $M_\mathbb{R} = M \otimes_\mathbb{Z} \mathbb{R}$. 

Let $\mathcal{L}$ be a $\mathbb{T}$-equivariant line bundle on $\mathcal{X}$ that is generated by its global sections. Then there exists a $\mathbb{T}$-Cartier divisor $D$ on $\mathcal{X}$ such that $\mathcal{L} \simeq \mathcal{O}(D)$. The divisor $D$ determines a rational convex polytope $\Delta_D \subset M_\mathbb{R}$. The space of global sections of $\mathcal{O}(D)$ is then described combinatorially by the lattice points in $\Delta_D$, as follows:
\[
H^0(\mathcal{X}, \mathcal{O}(D)) = \bigoplus_{m \in \Delta_D \cap M} \mathbb{Z} \cdot \chi^m,
\]
where $\chi^m$ denotes the character associated with $m \in M$. For details, we refer the reader to \cite{Fulton, Oda}.

Following \cite[Section 3]{Burgos3}, we define a Hermitian line bundle $\overline{\mathcal{O}(D)} := (\mathcal{O}(D), \|\cdot\|)$ on $\X$ as toric if $D$ is a toric divisor and its associated Green function is invariant under the action of $\mathbb{S}$, the compact torus of $\X(\C)$. Let $\mu$ denote a smooth volume form on $\X(\C)$, which is invariant with respect to the action of $\mathbb{S}$. Furthermore, let $\overline{\mathcal{O}(D)}$ be a toric, continuous Hermitian line bundle on $\X$.  One can see that $(\chi^m)_{m \in \Delta_D \cap M}$ forms a $\mathbb{Z}$-basis of $\overline{H^0(\mathcal{X}, \mathcal{O}(D))}_{L^2, \overline{\mathcal{O}(D)}}$ that is orthogonal with respect to the Euclidean norm $\|\cdot\|_{L^2, \overline{\mathcal{O}(D)}}$. This observation plays a key role in the proof of the integral representation for the arithmetic volume of toric Hermitian line bundles (see \cite[Lemma 2.2]{Moriwaki} and \cite{MounirKJM, Burgos3}). For further background on the Arakelov geometry of toric varieties, we refer the reader to \cite{Burgos3, Burgos2}.\\

The classical theory of Euclidean lattice reduction seeks to construct distinguished bases of Euclidean lattices, commonly referred to as \emph{reduced bases}. Loosely speaking, this theory demonstrates that an Euclidean lattice of rank \( n > 0 \) can be effectively approximated by a direct sum of rank one Euclidean lattices, expressed as \( \overline{E}_1 \oplus \overline{E}_2 \oplus \ldots \oplus \overline{E}_n \), where \( \overline{E}_1, \ldots, \overline{E}_n \) denote rank one  Euclidean lattices. This approximation is achieved with a controlled error that depends on \( n \). Consequently, the lattice can be approximately characterized by \( n \) real parameters \( \mu_i := \widehat{\deg}\, \overline{E}_i \). For detailed presentations and relevant references, the reader is referred to \cite{LLS, LLL}.

Motivated by the above discussion, we  prove the following result.

\begin{theorem}\label{thmOrthog}
Let $\overline{E} = (E, \|\cdot\|)$ be a Euclidean lattice of rank $n$, and suppose that $\overline{E}$ admits an orthogonal $\mathbb{Z}$-basis. Then the following inequality holds:
\[
-\frac{1}{2}n  \log n+\log\left(1-\tfrac{1}{2\pi}\right)\leq \hat h^0(\overline E)+\sum_{i=1}^n \log\min(\lambda_i(\overline E),1)\leq \pi+ n\log \frac{3}{2},
\]
where $\lambda_i(\overline E)$ is the $i$-th successive minimum of $\overline E$, see Section \ref{sm} for the definition.
\end{theorem}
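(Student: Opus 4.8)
The plan is to reduce the statement to rank one, exploiting that the hypothesis gives a genuine \emph{module} decomposition $\overline E=\bigoplus_{i=1}^n \overline E_i$ with $\overline E_i=(\mathbb Z e_i,\|\cdot\|)$ and the $e_i$ pairwise orthogonal; I then treat the two inequalities separately, since they have different shapes. First I would set $a_i:=\|e_i\|$ and arrange $a_1\le\cdots\le a_n$. An elementary linear-algebra step identifies the successive minima: $\lambda_i(\overline E)=a_i$. Indeed $e_1,\dots,e_i$ are $i$ linearly independent vectors of norm $\le a_i$, so $\lambda_i\le a_i$; conversely, given linearly independent $v_1,\dots,v_i\in E$ written in the basis $(e_k)$, the $i\times n$ coefficient matrix has rank $i$, so some row is nonzero in a column $k\ge i$, whence that $v_j$ has $\|v_j\|\ge a_k\ge a_i$. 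Thus $\{\lambda_i(\overline E)\}_i=\{a_i\}_i$ as multisets and in particular $\sum_i\log\min(\lambda_i,1)=\sum_i\log\min(a_i,1)$.

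For the upper bound I would use the theta invariant. Because the scalar product is diagonal in $(e_i)$, the theta series factors:
\[
e^{h_\theta^0(\overline E)}=\sum_{m\in\mathbb Z^n}e^{-\pi\sum_i a_i^2 m_i^2}=\prod_{i=1}^n\theta(a_i^2),\qquad \theta(t):=\sum_{m\in\mathbb Z}e^{-\pi t m^2}.
\]
From Jacobi's functional equation $\theta(t)=t^{-1/2}\theta(1/t)$ (Poisson summation for the Gaussian), the monotonicity of $\theta$, and the crude estimate $\theta(1)<3/2$, one gets the rank-one bound $\log\theta(\lambda^2)\le\log\tfrac32-\log\min(\lambda,1)$ for every $\lambda>0$: for $\lambda\ge1$ use $\theta(\lambda^2)\le\theta(1)$, and for $\lambda<1$ use $\theta(\lambda^2)=\lambda^{-1}\theta(\lambda^{-2})\le\lambda^{-1}\theta(1)$. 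Combining this with the factorization, with Step~1, and with the elementary Gaussian-domination inequality $\widehat h^0(\overline E)\le\pi+h_\theta^0(\overline E)$ (from $1\le e^{\pi(1-\|v\|^2)}$ on the unit ball, which is also the easy half of Proposition~\ref{strictineq1}), I obtain
\[
\widehat h^0(\overline E)+\sum_{i=1}^n\log\min(\lambda_i,1)\le\pi+\sum_{i=1}^n\bigl(\log\theta(a_i^2)+\log\min(a_i,1)\bigr)\le\pi+n\log\tfrac32 .
\]

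For the lower bound I would argue directly, without theta series. The inclusion $\{m\in\mathbb Z^n:\sum_i a_i^2 m_i^2\le1\}\supseteq\prod_{i=1}^n\{m_i\in\mathbb Z:a_i^2 m_i^2\le 1/n\}$ gives, using the cardinality $2\lfloor t\rfloor+1\ge\max(1,t)$ of $[-t,t]\cap\mathbb Z$,
\[
\widehat h^0(\overline E)\ge\sum_{i=1}^n\log\bigl(2\lfloor(\sqrt n\,a_i)^{-1}\rfloor+1\bigr)\ge-\sum_{i=1}^n\log\min(1,\sqrt n\,a_i).
\]
Then the pointwise inequality $\min(1,\sqrt n\,a)\le\sqrt n\,\min(1,a)$ (immediate in the three ranges $a\ge1$, $n^{-1/2}\le a<1$, $a<n^{-1/2}$) yields $\widehat h^0(\overline E)\ge-\sum_i\log\min(1,a_i)-\tfrac n2\log n=-\sum_i\log\min(1,\lambda_i)-\tfrac n2\log n$, which, since $\log(1-\tfrac1{2\pi})<0$, is even slightly stronger than the claimed inequality.

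I do not expect a genuine obstacle here: once the orthogonal decomposition is available, both directions are short, and the only care needed is bookkeeping the explicit constants in the rank-one theta estimate (the bounds $\theta(t)\ge1$ and $\theta(1)<3/2$, and the correct use of $\theta(t)=t^{-1/2}\theta(1/t)$). The mildly delicate point is purely cosmetic, namely producing exactly the constant $\log(1-\tfrac1{2\pi})$ so as to match the form of Theorem~\ref{main}; the elementary lower bound already gives the cleaner constant $-\tfrac n2\log n$, from which the stated one follows for free.
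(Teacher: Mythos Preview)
Your proof is correct. The upper bound is essentially identical to the paper's: both factor the theta series over the orthogonal basis, use the rank-one estimate $\theta(\lambda^2)\min(\lambda,1)\le\tfrac32$ (the paper packages this as Lemma~\ref{lemma91224}), and then invoke the easy half of Proposition~\ref{strictineq1}.

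The lower bound, however, is genuinely different. The paper stays with the theta invariant throughout: it uses the other half of Lemma~\ref{lemma91224}, namely $\min(1,\sqrt t)\theta_{\overline{\mathbb Z}}(t)\ge 1$, to get $h_\theta^0(\overline E)+\sum_i\log\min(\lambda_i,1)\ge 0$, and then applies the hard half of Proposition~\ref{strictineq1} to pass from $h_\theta^0$ to $\widehat h^0$, which is where the constants $-\tfrac12 n\log n$ and $\log(1-\tfrac1{2\pi})$ appear. Your argument bypasses theta series entirely on this side: the box inclusion $\{\sum_i a_i^2 m_i^2\le 1\}\supseteq\prod_i\{|m_i|\le(\sqrt n\,a_i)^{-1}\}$ together with the elementary bound $2\lfloor t\rfloor+1\ge\max(1,t)$ gives the lower bound directly, and with the slightly better constant $-\tfrac12 n\log n$ (no $\log(1-\tfrac1{2\pi})$ needed). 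What the paper's route buys is uniformity---both inequalities flow from the single two-sided estimate $0\le h_\theta^0(\overline E)+\sum_i\log\min(\lambda_i,1)\le n\log\tfrac32$, which is of independent interest---while your route is more self-contained and avoids the Banaszczyk-type inequality \eqref{10111} for the lower bound.
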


We generalize this result in Corollary~\ref{cor1} by showing that, for any normed $\mathbb{Z}$-module $\overline{E}$ of rank $n$, the estimate

$$
\hat{h}^0(\overline{E}) + \sum_{i=1}^n \log \min(\lambda_i(\overline{E}), 1) = O(n \log n)
$$
holds, where  the error term $O(n \log n)$ depends only on $n$.\\

\
In Paragraph \ref{AB}, we study the notion of arithmetic bigness in Arakelov geometry. The results presented in this section are primarily applications of the theory developed in this paper. 
Let $\X$ be an arithmetic variety over $\mathbb{Z}$ of dimension $n+1$. We assume that the generic fibre $\X_\Q$ is smooth. Let $\overline{\LL} = (\LL, \|\cdot\|_{\overline{\LL}})$ be a smooth Hermitian line bundle on $\X$.  Yuan \cite{YuanInventiones} introduced the condition:

\[
\underset{k \to \infty}{\liminf} \frac{\log \# \{ s \in H^0(\mathcal{X}, k\mathcal{L}) \mid \|s\|_{\sup, k\overline{\LL}} < 1 \}}{k^{n+1}/(n+1)!} > 0,
\]

as a criterion for defining an arithmetically big  line bundle. Moriwaki \cite{Moriwaki2} proposed an alternative definition for arithmetic big line bundles: $\overline{\LL}$ is said to be arithmetically big if $\LL_\Q$ is big and there exists a positive integer $k$ and a nonzero global section $s$ of $k\overline{\LL}$ such that $\|s\|_{\sup, k\overline{\LL}} < 1$. He demonstrated that Yuan's definition is equivalent to the existence of a nonzero section of a power of $\LL$ with supremum norm less than 1, and that $\LL_\Q$ is big \footnote{That is, $\mathrm{vol}(\mathcal{L}_\mathbb{Q}) > 0$, which by definition means

    \[
    \limsup_{k\to\infty} \frac{h^0(\mathcal{X}_\mathbb{Q},\,k\mathcal{L}_\mathbb{Q})}{k^n/n!} > 0,
    \]
    where $n = \dim \mathcal{X}_\mathbb{Q}$.}.

We provide an alternative proof of Moriwaki's result; see Theorem \ref{app} and the discussion following it.

\section{Normed $\Z$-modules }\label{sec2}

A normed $\Z$-module $\overline{E}=(E,\|\cdot\|)$ is  a $\Z$-module of finite type
endowed with a norm $\|\cdot\|$ on the $\C$-vector space
$E_\C=E\otimes_\Z \C$.  Let $E_{\mathrm{tors}}$ denote  the torsion-module 
of $E$, $E_{\mathrm{free}}=E/E_{\mathrm{tors}}$, and 
$E_\R=E\otimes_\Z\R$. We let $B(E,\|\cdot\|)=\{m\in E_\R\mid  \|m\|\leq 1\}$. There exists
a unique Haar measure  on  $E_\R$ such that the volume of $B(E,\|\cdot\|)$ is $1$. We let
\[
\widehat{\chi}(E,\|\cdot\|)=\log \# E_{\mathrm{tors}}-\log \mathrm{vol}(E_\R/ (E/ E_{\mathrm{tors}}) ).
\]

Equivalently, we have
\[
\widehat{\chi}(E,\|\cdot\|)=\log \# E_{\mathrm{tors}}-\log\left( \frac{\mathrm{vol}(E_\R/(E/E_{\mathrm{tor}} ))}{\mathrm{vol}(B(E,\|\cdot\|))}\right),
\]
for any choice of a Haar measure of
$E_\R$. \\

The arithmetic degree of $(E,\|\cdot\|)$ is defined as follows

\[
\widehat{\deg}(E,\|\cdot\|)=\widehat{\deg} \,\overline{E}=  \widehat{\chi}(\overline{E})-\widehat{\chi}(\overline{\Z}^n),
\]
where $\widehat{\chi}(\overline{\Z}^n)=-\log \left( \Gamma(\frac{n}{2}+1)\pi^{-\frac{n}{2}}  \right)$, with $n$ is the rank of $E_\R$.  \\

When the norm $\|\cdot\|$ is induced by a Hermitian  product $\left<\cdot,\cdot\right>$,
 we have 
\[
\widehat{\deg}(\overline{E})=\log \# E/(s_1,\ldots,s_n)-\log \sqrt{\det(\left<s_i,s_j\right>)_{1\leq i,j\leq n}  },
\]
where $s_1,\ldots,s_n$ are elements of $E$ such that their images 
in $E\otimes_\Z\Q$ form a basis. \\

We define $\widehat{H}^0(\overline{E})$ and  $\widehat{h}^0(\overline{E})$ 
to be
\[
\widehat{H}^0(\overline{E})=\left\{ m\in E \mid \|m\|\leq 1  \right\}\quad\text{and}\quad \widehat{h}^0(\overline{E})=\log \# \widehat{H}^0(\overline{E}).
\]
We let
\[
\widehat{H}^1(\overline{E}):=\widehat{H}^0(\overline{E}^\vee)
\quad\text{and}\quad \widehat{h}^1(\overline{E}):=\widehat{h}^0(\overline{E}^\vee),
\]
where $\overline{E}^\vee$ is the $\Z$-module $E^\vee=\mathrm{Hom}_\Z(E,\Z)$ endowed with the dual norm $\|\cdot\|^\vee$ defined as follows
\[
\|f\|^\vee=\sup_{m\in E_\R\setminus\{0\}}\frac{ |f(m)|}{\|m\|}, \quad\forall f\in E^\vee.
\]

Gillet and Soul\'e \cite{Soule-lattice}  proved  an arithmetic analogue of geometric Riemann-Roch theorem for curves. It can be stated as follows: \begin{equation}\label{h0h1}
-\log(6) \ \mathrm{rk}\ E \leq  \widehat{h}^0(\overline{E})-\widehat{h}^1(\overline{E}) -\widehat{\deg}(\overline{E})\leq \log(\tfrac{3}{2}) \mathrm{rk}\, E+2\log ((\mathrm{rk}\ E)!),
\end{equation}
see  \cite[Proposition 2.1]{Moriwaki2} and  also \cite{YuanInventiones}.\\ 

In this paper,  $\overline E_t$ will denote  the normed $\Z$-module  $E$ endowed with the norm 
$t\|\cdot\|$ where  $t>0$.\\

Let $v_n$ denote the volume of the unit ball in $\R^n$ endowed with its standard Euclidean structure.
It is known that
\[
v_{{n}}=\frac{\pi^{n/2} }{\Gamma(\frac{{n}}{2}+1)}.
\]
Stirling's formula gives that
\begin{equation}\label{stirling}
-\frac{n}{2}\log n+n\log 2\leq \log v_n\leq -\frac{n}{2}\log n+ \frac{n}{2}(\log (2\pi)+1)\quad \forall n\gg 1.
\end{equation}

\section{On  theta invariants of  Euclidean lattices}\label{tt}

In this section, we review some elements of the theory of Euclidean lattices. 
Let $\overline E=(E,\|\cdot\|_{\overline E})$ be  a Euclidean lattice over $\Z$ of rank $n$. 
  We recall that this means that we are given the following data: 
A finite-dimensional $\R$-vector space $E_\R$, ($n:=\dim_\R E_\R$),  
 a lattice $E$ in $E_\R$, and $\{e_1,\ldots,e_n\}$  a $\R$-basis of $E_\R$ such that $E=\oplus_{i=1}^n \Z e_i$,  and  a Euclidean norm $\|\cdot\|_{\overline E}$ associated to some Euclidean scalar product $\langle \cdot,\cdot \rangle $ on $E_\R$. 
 
Let  $\lambda_{\overline E}$ be the  unique translation-invariant Radon measure on $E_\R$ which satisfies the following normalization condition:  for every orthonormal basis $\{e_1,\ldots,e_n\}$ of $(E_\R,\|\cdot\|_{\overline E})$,
\[
\lambda_{\overline E} \Bigl(\sum_{i=1}^n [0,1[e_i \Bigr)=1.
\] 
We set
\[
\mathrm{covol}(\overline E):=\mathrm{vol}(E_\R/E),\]
which is, by definition,  $\lambda_{\overline E}(\sum_{i=1}^N [0,1[v_i)$
for every $\Z$-basis $\{v_1,\ldots,v_n\}$ of $E$. $\mathrm{covol}(\overline E)$ is called the covolume of $\overline E$.  Note that \[
\widehat{\deg}(\overline E)=-\log \mathrm{covol}(\overline E).
\]

$\overline E$ induces a natural Euclidean structure on the dual $E^\vee$. We denote the resulting Euclidean lattice by $\overline E^\vee$.\\

We let 
\[
\theta_{\overline E}(t)=\sum_{v\in E} e^{-\pi t  \|v\|_{\overline E}^2}\quad (t>0).
\]
$\theta_{\overline E}$ is called \emph{the theta series} associated  with $\overline E$.

By the Poisson summation formula, we obtain a relation between the theta series of $\overline E$ and $\overline E^\vee$. That is
\begin{equation}\label{poisson}
\sum_{v\in E} e^{-\pi \|v\|_{\overline E}^2}=(\mathrm{covol}(\overline E))^{-1} \sum_{v^\vee \in E^\vee} e^{-\pi \|v^\vee\|_{\overline E^\vee}^2}.
\end{equation}

One can attach to $\overline E$ another arithmetic invariant  $h^0_\theta(\overline E)$ called the \emph{theta invariant} of $\overline E$. It is given as follows:
\[
h_\theta^0(\overline E):=\log \theta_{\overline E}(1).
\]
We let 
\[
h_\theta^1(\overline E):=h_\theta^0(\overline E^\vee).
\]

 The equation \eqref{poisson} may be written 
  as follows:
 \begin{equation}\label{LogPoisson}
h_\theta^0(\overline E)-h_\theta^{1}(\overline E)-\widehat{\deg}(\overline E)=0.
\end{equation}

\begin{proposition}\label{strictineq1}
Let $\overline E$ be an Euclidean lattice. We have 
\begin{equation}\label{ArTheta}
\begin{split}
h_\theta^0(\overline E)-\frac{1}{2}\mathrm{rk} \;E  \log \mathrm{rk}\;E+\log\left(1-\tfrac{1}{2\pi}\right)\leq  & \log \#\{ v \in E | \,  \|v\|_{\overline E}<1\}  \\
& \leq \log \#\{ v \in E | \,  \|v\|_{\overline E}\leq 1\}  \leq h_\theta^0(\overline E)+\pi,
\end{split}
\end{equation}
where \(\text{rk}\,E\) denotes the rank of the lattice \(E\).

\end{proposition}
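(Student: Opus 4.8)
The plan is to establish the three inequalities separately, using only elementary estimates on the theta series. The middle inequality $\log\#\{v\in E\mid \|v\|<1\}\le \log\#\{v\in E\mid \|v\|\le 1\}$ is trivial since $\{\|v\|<1\}\subseteq\{\|v\|\le 1\}$, so the work is in the outer two bounds. For the \emph{upper bound}, I would simply observe that every $v\in E$ with $\|v\|_{\overline E}\le 1$ contributes a term $e^{-\pi\|v\|_{\overline E}^2}\ge e^{-\pi}$ to $\theta_{\overline E}(1)=\sum_{v\in E}e^{-\pi\|v\|_{\overline E}^2}$. Hence $\theta_{\overline E}(1)\ge e^{-\pi}\cdot\#\{v\in E\mid \|v\|_{\overline E}\le 1\}$, and taking logarithms gives $\log\#\{v\in E\mid\|v\|_{\overline E}\le 1\}\le h_\theta^0(\overline E)+\pi$, as desired.

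For the \emph{lower bound} the idea is to bound the tail of the theta series, i.e.\ the contribution of the vectors with $\|v\|_{\overline E}\ge 1$. Write $\theta_{\overline E}(1)=\#\{v\in E\mid\|v\|_{\overline E}<1\}+\sum_{\|v\|_{\overline E}\ge 1}e^{-\pi\|v\|_{\overline E}^2}$, so it suffices to show the tail sum is at most $(1-\tfrac{1}{2\pi})^{-1}$ times something comparable, or more precisely to get $\sum_{\|v\|_{\overline E}\ge 1}e^{-\pi\|v\|_{\overline E}^2}\le \tfrac{1}{2\pi}(\text{rk}\,E)^{\,\text{rk}\,E/2}\,(\text{roughly})$. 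Concretely I would compare the tail with the integral $\int_{\|x\|_{\overline E}\ge 1}e^{-\pi\|x\|_{\overline E}^2}\,d\lambda_{\overline E}(x)$ after a standard shift/monotonicity argument (each lattice point $v$ with $\|v\|\ge1$ can be charged to a unit cell on which $e^{-\pi\|x\|^2}$ is, up to a controlled factor, at least $e^{-\pi\|v\|^2}$), and then evaluate the Gaussian tail integral in polar coordinates: $\int_{\|x\|\ge1}e^{-\pi\|x\|^2}dx = n v_n\int_1^\infty r^{n-1}e^{-\pi r^2}dr$, where $n=\text{rk}\,E$. Using $r^{n-1}e^{-\pi r^2}\le$ (its value controlled by the maximum of $r\mapsto r^{n-1}e^{-\pi r^2/2}$, namely at $r=\sqrt{(n-1)/\pi}$, times $e^{-\pi r^2/2}$) one bounds this integral by a quantity of the form $C\cdot n^{n/2}$ with the constant absorbing $v_n\sim$ Stirling from \eqref{stirling}; the factor $(1-\tfrac{1}{2\pi})$ then emerges from summing a geometric-type series $\sum_{k\ge 0}e^{-\pi(k+\cdots)}$ or from the elementary bound $\int_1^\infty e^{-\pi r^2}\,2\pi r\,dr = e^{-\pi}$ combined with $\tfrac{1}{2\pi}\ge e^{-\pi}$. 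Rearranging yields $\#\{v\in E\mid\|v\|_{\overline E}<1\}\ge \theta_{\overline E}(1) - \tfrac{1}{2\pi}(\text{rk}\,E)^{\text{rk}\,E/2}$, and then using $\log(a-b)\ge\log a+\log(1-b/a)$ together with $\theta_{\overline E}(1)\ge 1$ (the zero vector contributes $1$) gives the stated inequality with the additive term $\log(1-\tfrac{1}{2\pi})$.

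The main obstacle is the lower bound, and within it the precise packaging of the Gaussian tail estimate so that the constants come out to be exactly $\tfrac12\,\text{rk}\,E\log\text{rk}\,E$ and $\log(1-\tfrac{1}{2\pi})$ rather than something merely of the same order. The delicate points are: (i) the comparison of the discrete tail sum with the continuous Gaussian integral, which requires either a monotonicity/rearrangement argument or Banaszczyk-type bounds on $\theta_{\overline E}(t)$ for $t$ near $1$; and (ii) controlling $v_n=\pi^{n/2}/\Gamma(\tfrac n2+1)$ via \eqref{stirling} and checking that the small-rank cases ($n=1,2,\dots$) also satisfy the stated inequality (this may require treating a few initial ranks by hand, since \eqref{stirling} is only asserted for $n\gg1$). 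Everything else is bookkeeping with the elementary inequalities $e^{-\pi}\le\tfrac{1}{2\pi}$ and $\log(a-b)\ge \log a + \log(1-b/a)$.
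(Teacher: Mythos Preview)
Your upper bound is correct and identical to the paper's: each $v$ with $\|v\|_{\overline E}\le 1$ contributes at least $e^{-\pi}$ to $\theta_{\overline E}(1)$.

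The lower bound has a genuine gap. You propose to bound the tail $\sum_{\|v\|_{\overline E}\ge 1}e^{-\pi\|v\|_{\overline E}^2}$ by a quantity of order $n^{n/2}$ depending only on $n=\mathrm{rk}\,E$, via comparison with the Gaussian integral $\int_{\|x\|\ge 1}e^{-\pi\|x\|^2}\,dx$. No such lattice-independent bound exists: for $\overline E=\varepsilon\,\overline{\Z}^n$ with $\varepsilon\to 0$ the tail sum behaves like $\varepsilon^{-n}\int_{\|x\|\ge 1}e^{-\pi\|x\|^2}\,dx\to\infty$. The step ``charge each lattice point to a unit cell'' tacitly assumes $\mathrm{covol}(\overline E)$ is bounded below, which is not given. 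Consequently the final rearrangement also collapses: if $\theta_{\overline E}(1)$ is close to $1$ (a sparse lattice) and $n\ge 2$, then $\theta_{\overline E}(1)-\tfrac{1}{2\pi}n^{n/2}<0$ and you cannot take its logarithm. (A minor point: your decomposition should read $\theta_{\overline E}(1)\le \#\{\|v\|<1\}+\text{tail}$, not equality, since each term $e^{-\pi\|v\|^2}\le 1$; this slip is harmless for the direction you need.)

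The paper, following Banaszczyk, avoids this by bounding the tail as a \emph{fraction} of the full theta series rather than absolutely. Differentiating the Poisson identity
\[
\log\theta_{\overline E}(t)+\tfrac{n}{2}\log t+\log\mathrm{covol}(\overline E)=\log\theta_{\overline E^\vee}(1/t)
\]
and using that both resulting terms are nonnegative yields $\sum_{v\in E}\|v\|^2e^{-\pi t\|v\|^2}\le\tfrac{n}{2\pi t}\,\theta_{\overline E}(t)$, hence
\[
\#\{\|v\|<1\}\ \ge\ \sum_{\|v\|<1}e^{-\pi t\|v\|^2}\ \ge\ \Bigl(1-\tfrac{n}{2\pi t}\Bigr)\theta_{\overline E}(t).
\]
Taking $t=n$ and using that $t^{n/2}\theta_{\overline E}(t)=\mathrm{covol}(\overline E)^{-1}\theta_{\overline E^\vee}(1/t)$ is nondecreasing on $[1,\infty)$ gives $\theta_{\overline E}(n)\ge n^{-n/2}\theta_{\overline E}(1)$, and the exact constants $\tfrac{n}{2}\log n$ and $\log(1-\tfrac{1}{2\pi})$ fall out immediately. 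This \emph{relative} tail bound is the missing idea in your argument.
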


\begin{proof}  See  \cite{Bana} or \cite{BostTheta}.  For reader's convenience, we recall the proof of
\eqref{ArTheta}.  By  the Poisson summation formula, we have 
\begin{equation}\label{thetaincreases}
\log \theta_{\overline E}(t)+\frac{1}{2}\mathrm{rk}\, E \log t+\log \mathrm{covol }(\overline E)=\log \theta_{\overline E^\vee}(\frac{1}{t})\quad \forall t>0.
\end{equation}
We differentiate this equation to get that
\[
2\pi \sum_{v\in E} \|v\|_{\overline E}^2 \frac{e^{-\pi t \|v\|^2_{\overline E}}}{
\sum_{u\in E} e^{-\pi t \|u\|_{\overline E}^2}}+
\frac{2\pi}{t^2} \sum_{v^\vee \in E^\vee} \|v^\vee\|_{\overline E^\vee}^2 \frac{e^{-\frac{\pi}{ t} \|v^\vee\|^2_{\overline E^\vee}}}{
\sum_{u^\vee\in E^\vee} e^{-\frac{\pi}{ t} \|u^\vee\|_{\overline E^\vee}^2}}=\frac{\mathrm{rk}\, E}{t}\quad \forall t>0.
\]
It follows that
\[
 \sum_{v\in E} \|v\|_{\overline E}^2 e^{-\pi t \|v\|^2_{\overline E}}\leq \frac{\mathrm{rk}\, E}{2\pi t} \sum_{u\in E} e^{-\pi t \|u\|_{\overline E}^2}\quad \forall \,t>0.
\]
From which we infer the following inequality
\begin{equation}\label{10111}
\left(1-\frac{\mathrm{rk}\, E}{2\pi t} \right)\sum_{u\in E} e^{-\pi t \|u\|_{\overline E}^2}\leq \sum_{\substack{u\in E \\
\|u\|_{\overline E} <1}} e^{-\pi t \|u\|_{\overline E}^2}\quad \forall t>0.
\end{equation}
Let $t>\max(1,\frac{\mathrm{rk}\, E}{2\pi})$. We have
\[
\begin{split}
\log \#\{ v \in E | \,  \|v\|_{\overline E}<1\} &\geq  \log\Bigl( \sum_{\substack{v\in E\\
\|v\|<1 }} e^{-\pi t \|v\|^2} \Bigr) \\
&\geq  \log \theta_{\overline E}(t)+\log\left(1-\frac{\mathrm{rk}\, E}{2\pi t}\right) \quad \text{(by \eqref{10111})}\\
&\geq \log \theta_{\overline E}(1)-\frac{\mathrm{rk}\, E}{2} \log t+\log\left(1-\frac{\mathrm{rk}\, E}{2\pi t}\right)\quad 
\text{(by \eqref{thetaincreases})}.\\
\end{split}
\]
By taking $t=\mathrm{rk}\, E$, we obtain
\[
\log \#\left\{ v \in E | \,  \|v\|_{\overline E}<1\right\} \geq 
h_\theta^0(\overline E)-\frac{\mathrm{rk}\, E}{2} 
\log \mathrm{rk}\, E+\log\left(1-\frac{1}{2\pi}\right).
\]
On the other hand, it is clear that
\[
\log \#\left\{ v \in E | \,  \|v\|_{\overline E}\leq 1\right\} \leq h_\theta^0(\overline E)+\pi.
\]
This concludes the proof of the proposition.

\end{proof}

\begin{proposition}\label{weakGS} Let $\overline E$ be an Euclidean lattice. We have
\[
\begin{split}
-\frac{1}{2}\mathrm{rk}(E) \log \mathrm{rk}(E) +\log\left(1-\tfrac{1}{2\pi}\right)-&\pi \leq \widehat{h}^0(\overline E) -\widehat{h}^1(\overline E)-\widehat{\deg}(\overline E) \\
&\leq  \frac{1}{2}\mathrm{rk}(E) \log \mathrm{rk}(E) +\pi -\log\left(1-\tfrac{1}{2\pi}\right).
\end{split}
\]
\end{proposition}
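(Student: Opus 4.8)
The plan is to combine the two-sided comparison between $\widehat{h}^0$ and the theta invariant $h_\theta^0$ (Proposition~\ref{strictineq1}) with the exact functional identity \eqref{LogPoisson} coming from Poisson summation. The key observation is that \eqref{LogPoisson} expresses $\widehat{\deg}(\overline E)$ cleanly as $h_\theta^0(\overline E) - h_\theta^1(\overline E)$, so that the quantity to be estimated rewrites as a difference of two ``error terms'':
\[
\widehat{h}^0(\overline E) - \widehat{h}^1(\overline E) - \widehat{\deg}(\overline E)
= \bigl(\widehat{h}^0(\overline E) - h_\theta^0(\overline E)\bigr) - \bigl(\widehat{h}^1(\overline E) - h_\theta^1(\overline E)\bigr).
\]
Each parenthesized term is then controlled by Proposition~\ref{strictineq1}: the first directly, and the second by applying Proposition~\ref{strictineq1} to the Euclidean lattice $\overline E^\vee$ (which has the same rank $n$), using that $h_\theta^1(\overline E) = h_\theta^0(\overline E^\vee)$ and $\widehat{h}^1(\overline E) = \widehat{h}^0(\overline E^\vee)$ by definition.

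Concretely, first I would record that \eqref{ArTheta} applied to $\overline E$ gives
\[
-\tfrac{1}{2} n \log n + \log\bigl(1 - \tfrac{1}{2\pi}\bigr) \ \le\ \widehat{h}^0(\overline E) - h_\theta^0(\overline E)\ \le\ \pi,
\]
with $n = \mathrm{rk}\,E$, and that the same inequalities hold with $\overline E$ replaced by $\overline E^\vee$, i.e.
\[
-\tfrac{1}{2} n \log n + \log\bigl(1 - \tfrac{1}{2\pi}\bigr) \ \le\ \widehat{h}^1(\overline E) - h_\theta^1(\overline E)\ \le\ \pi .
\]
Subtracting the second chain of inequalities from the first (upper minus lower and lower minus upper) yields
\[
-\tfrac{1}{2} n \log n + \log\bigl(1 - \tfrac{1}{2\pi}\bigr) - \pi
\ \le\ \bigl(\widehat{h}^0(\overline E) - h_\theta^0(\overline E)\bigr) - \bigl(\widehat{h}^1(\overline E) - h_\theta^1(\overline E)\bigr)
\ \le\ \tfrac{1}{2} n \log n + \pi - \log\bigl(1 - \tfrac{1}{2\pi}\bigr).
\]
Finally, substituting the identity $h_\theta^0(\overline E) - h_\theta^1(\overline E) = \widehat{\deg}(\overline E)$ from \eqref{LogPoisson} into the middle expression turns it into exactly $\widehat{h}^0(\overline E) - \widehat{h}^1(\overline E) - \widehat{\deg}(\overline E)$, which is the desired statement.

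There is essentially no serious obstacle here; the proposition is a formal consequence of the preceding results. The only points requiring a moment's care are bookkeeping ones: that Proposition~\ref{strictineq1} is legitimately applied to $\overline E^\vee$ (it is again a Euclidean lattice of rank $n$), that the biduality $(\overline E^\vee)^\vee \cong \overline E$ is what makes $h_\theta^1$ and $\widehat{h}^1$ fit the same pattern, and that one pairs the inequalities with the correct signs when subtracting. No asymptotic hypothesis ($n \gg 1$) is needed, since \eqref{ArTheta} holds for all $n$.
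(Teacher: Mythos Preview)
Your proof is correct and is exactly the approach taken in the paper: combine the two-sided estimate \eqref{ArTheta} from Proposition~\ref{strictineq1} (applied to both $\overline E$ and $\overline E^\vee$) with the Poisson identity \eqref{LogPoisson}. There is nothing to add.
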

\begin{proof}
We combine \eqref{ArTheta} with  \eqref{LogPoisson} to conclude the proof of the proposition. 

\end{proof}

\begin{proof}[Proof of Theorem \ref{main}]

Let $\overline E=(E,\|\cdot\|)$ be a normed $\Z$-module of rank $n$.  There exists a Euclidean norm 
$\|\cdot \|_J$ on $E$ satisfying the following
\[
\|\cdot\|\leq \|\cdot\|_J\leq {n}^{\frac{1}{2}}\|\cdot\|.
\]
This norm is called John norm, see for instance \cite[Appendix F, 355]{BostTheta}. 
This gives us the following inequalities.
\[
\widehat{\chi}(\overline E_J)\leq \widehat{\chi}(\overline E)\leq 
\frac{n}{2} \log {n}+\widehat{\chi}(\overline E_J),
\]
and
\[
\widehat{h}^0(\overline E_J)\leq \widehat{h}^0(\overline E)\leq 
\widehat{h}^0((\overline{E}_{J})_{n^{-\frac{1}{2}}} ),
\]
and
\[
\widehat{h}^0(\left((\overline{E}_{J})_{{n}^{-\frac{1}{2}} }\right)^\vee)\leq \widehat{h}^0(\overline E^\vee)\leq \widehat{h}^0(\left(\overline E_J\right)^\vee),
\]
where $\overline E_J=(E,\|\cdot\|_J)$.\\ 
Let $\lambda_{\overline E_J}$ denote the  unique Lebesgue measure  on $E_\R$ that gives the volume $1$ to the unit cube in $(E_\R,\|\cdot\|_J)$. Then
\[
\widehat{\chi}(E,\|\cdot\|_J)=\log \mathrm{vol}(B(E,\|\cdot\|_J))+\widehat{\deg}(\overline E_J).
\]
Consequently, we get
\[
\log v_{{n}} +\widehat{\deg}(\overline E_J)\leq \widehat{\chi}(\overline E)\leq  \log v_{{n}} +
({n}/2) \log {n}+\widehat{\deg}(\overline E_J).
\]

So
\[
\begin{split} 
\widehat{h}^0(\overline E_J)-&\widehat{h}^1(\overline E_J)-
\log v_{{n}}-
({n}/2) \log {n}-\widehat{\deg}(\overline E_J)
\leq  \widehat{h}^0(\overline E)-\widehat{h}^1(\overline E)-\widehat{\chi}(\overline E)\\ 
&\leq \widehat{h}^0((\overline{E}_{J})_{n^{-\frac{1}{2}}})-\widehat{h}^1((\overline{E}_{J})_{n^{-\frac{1}{2}}})-\log v_{{n}} -\widehat{\deg}(\overline E_J).
\end{split}
\]
From Proposition \ref{weakGS}, we obtain
\[
\begin{split}
-{n} \log {n} -
\log v_{{n}}+\log\left(1-\tfrac{1}{2\pi}\right)-\pi & \leq  \widehat{h}^0(\overline E)-\widehat{h}^1(\overline E)-\widehat{\chi}(\overline E)  \\ 
&\leq    
{n} \log {n} +\pi -\log\left(1-\tfrac{1}{2\pi}\right)-\log v_{{n}}.
\end{split}
\]
We use \eqref{stirling}
to end the proof of the Theorem.

\end{proof}

\section{Successive minima and  Arithmetic bigness}\label{sm}

Minkowski defined $n$ successive minima  of a given convex body.   In the context of normed $\Z$-modules, they are given as follows. Let $\overline E$ be a  normed $\Z$-module of positive rank $n$.
The successive minima $(\lambda_i(\overline E))_{i=1,\ldots,n}$ of $\overline E$ are defined as follows:
\[
\lambda_i(\overline E)=\inf\left\{r >0 \mid \mathrm{rank}_\R (\mathrm{Span}_\R(E\cap \{ m\in 
E_\R\mid \|m\|\leq r\}))\geq i \right\}\quad (i=1,\ldots,n).
\]
Clearly
\[
\lambda_1(\overline E)\leq \ldots\leq \lambda_n(\overline E).
\]

\begin{lemma}\label{lemma91224} Let $\overline \Z$ be the Euclidean lattice $\Z$ endowed with the standard norm on $\R$. We have
\[ 
1\leq \min(1,\sqrt{t})\theta_{\overline \Z}(t)\leq  \frac{3}{2}\quad \forall t>0. \]
\end{lemma}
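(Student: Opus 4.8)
The plan is to use the functional equation of $\theta_{\overline\Z}$ to collapse the family of inequalities over all $t>0$ down to a single numerical estimate at $t=1$.

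Write $f(t):=\theta_{\overline\Z}(t)=\sum_{n\in\Z}e^{-\pi t n^2}$. I would first record two elementary properties. Since $f$ is a sum of the non-increasing positive functions $t\mapsto e^{-\pi t n^2}$ and contains the constant term coming from $n=0$, the function $f$ is continuous, strictly decreasing on $(0,\infty)$, and satisfies $f(t)>1$ for every $t>0$. Moreover, specializing the transformation formula \eqref{thetaincreases} to $\overline E=\overline\Z$ — which has covolume $1$ and is self-dual — (equivalently, applying the Poisson summation formula \eqref{poisson} to the Gaussian) yields $\sqrt t\, f(t)=f(1/t)$ for all $t>0$. Combining these, for every $t>0$,
\[
\min(1,\sqrt t)\,\theta_{\overline\Z}(t)=\min\bigl(f(t),\sqrt t\, f(t)\bigr)=\min\bigl(f(t),f(1/t)\bigr),
\]
and this last quantity is manifestly invariant under $t\mapsto 1/t$.

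The lower bound is then immediate: $f>1$ on $(0,\infty)$ forces $\min\bigl(f(t),f(1/t)\bigr)>1\ge 1$. For the upper bound, the symmetry $t\mapsto 1/t$ allows me to assume $t\ge 1$; then, since $f$ is decreasing, $\min\bigl(f(t),f(1/t)\bigr)=f(t)\le f(1)=\theta_{\overline\Z}(1)$. It therefore suffices to check $\theta_{\overline\Z}(1)\le\tfrac32$, which I would do by majorizing the tail by a geometric series:
\[
\theta_{\overline\Z}(1)=1+2\sum_{n\ge1}e^{-\pi n^2}\le 1+2\sum_{n\ge1}e^{-\pi n}=1+\frac{2}{e^\pi-1}<\frac32,
\]
the final inequality holding because $e^\pi>e^3>5$, so that $\dfrac{2}{e^\pi-1}<\dfrac12$.

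There is no genuine obstacle in this argument; the only step requiring a modicum of care is the numerical bound $\theta_{\overline\Z}(1)\le\tfrac32$, and the geometric-series estimate above settles it comfortably. The main point is conceptual rather than computational: once the functional equation is invoked, the entire statement reduces to the value of $\theta_{\overline\Z}$ at $t=1$ together with the monotonicity of $\theta_{\overline\Z}$.
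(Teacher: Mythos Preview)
Your proof is correct and follows essentially the same approach as the paper: both arguments use the functional equation $\sqrt{t}\,\theta_{\overline\Z}(t)=\theta_{\overline\Z}(1/t)$, the monotonicity of $\theta_{\overline\Z}$, the trivial lower bound $\theta_{\overline\Z}\ge 1$, and a geometric-series majorization to obtain $\theta_{\overline\Z}(1)\le\tfrac32$. Your repackaging via the identity $\min(1,\sqrt t)\,\theta_{\overline\Z}(t)=\min\bigl(f(t),f(1/t)\bigr)$ is a slightly cleaner way to expose the $t\leftrightarrow 1/t$ symmetry than the paper's explicit case split, but the substance is the same.
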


\begin{proof}
Let  $t\geq 1$, then  $\theta_{\overline \Z}(t)\leq \theta_{\overline \Z}(1)$. On the other hand, let   $t\in (0,1)$.  Since $\theta_{\overline \Z}(t)=\frac{1}{\sqrt{t}}\theta_{\overline \Z}(\frac{1}{t})$. Then $\theta_{\overline \Z}(t) \leq \frac{1}{\sqrt{t}}\theta_\Z(1).$
We infer that
\begin{equation}\label{thetaleq}
\theta_{\overline \Z}(t)\leq \frac{\theta_{\overline \Z}(1)}{\min(1,\sqrt{t})}\quad \forall \, t>0.
\end{equation}
Since $\min(1,\sqrt{t})\theta_{\overline \Z}(t)=\min(1,\frac{1}{\sqrt{t}}) \theta_{\overline \Z}(\frac{1}{t})$ for every $t>0$ and  $\theta_{\overline \Z}$ is a nondecreasing function and $\theta_\Z(t)\geq 1$, we obtain
\[
1\leq \min(1,\sqrt{t})\theta_{\overline \Z}(t)
\quad \forall t>0.
\]
Using the geometric growth of the exponential terms, we estimate:
\[\theta_{\overline \Z}(1)=1+2\sum_{n=1}^\infty e^{-\pi n^2}\leq \frac{1+e^{\pi}}{1-e^{-\pi}}\leq \frac{3}{2}.\] 
Thus, we establish the inequality:
\[ 
1\leq \min(1,\sqrt{t})\theta_{\overline \Z}(t)\leq  \frac{3}{2}\quad \forall t>0. \]
This ends the proof of the lemma.

\end{proof}

\begin{proof}[Proof of Theorem \ref{thmOrthog}]
Let  $\{e_1,\ldots,e_n\}$ be an orthogonal  $\Z$-basis of $\overline E$. Without loss of generality, assume that $\|e_1\|\leq \ldots\leq \|e_n\|$. Then
\[
\lambda_i(\overline E)=\|e_i\| \quad (i=1,\ldots,n).
\]
We have
\[
\begin{split}
\theta_{\overline E}(1) \prod_{i=1}^n \min(\lambda_i(\overline E),1)=&\prod_{i=1}^n \theta_{\overline \Z}(\|e_i\|^2)  \prod_{i=1}^n \min(\|e_i\|,1)\\
&\leq \prod_{i=1}^n \frac{ \frac{3}{2}}{\min(\|e_i\|,1)}
  \prod_{i=1}^n \min(\|e_i\|,1)\quad \text{(by Lemma \ref{lemma91224})}\\
  &\leq  \left(\frac{3}{2}\right)^n.
\end{split}
\]
On the other hand, by applying Lemma \ref{lemma91224} once again, we obtain:
\[
\begin{split}
1 \leq &
\prod_{\substack{i=1,\ldots,n \\ \lambda_i(\overline{E}) \leq 1}} \theta_{\overline{\Z}}(\lambda_i(\overline{E})^2) \prod_{\substack{i=1,\ldots,n \\ \lambda_i(\overline{E}) \leq 1}} \lambda_i(\overline{E}) \\
=&
\prod_{\|e_i\| \leq 1} \theta_{\overline{\Z}}(\|e_i\|^2) \prod_{\|e_i\| \leq 1} \|e_i\| \\
&\leq \prod_{i=1}^n \theta_{\overline{\Z}}(\|e_i\|^2) \prod_{i=1}^n \min(\|e_i\|,1)  \\ 
& = \theta_{\overline{E}}(1) \prod_{i=1}^n \min(\lambda_i(\overline E), 1),
\end{split}
\]
where the final inequality follows from the orthogonality property.
\\

Combining these results, we obtain:
\[0\leq \hat h_\theta^0(\overline E)+\sum_{i=1}^n \log\min(\lambda_i(\overline E),1)\leq n\log \frac{3}{2}. \]

Finally, by Proposition \ref{strictineq1}, we conclude:
\[
-\frac{1}{2}n  \log n+\log\left(1-\tfrac{1}{2\pi}\right) \leq \hat h^0(\overline E)+ 
\sum_i \log\min(\lambda_i(\overline E),1)\leq \pi +n \log \frac{3}{2}.
\]
This completes the proof.

\end{proof}

 \begin{theorem}\label{thm181224} Let $\overline E$ be a Euclidean lattice. We have
 \[
 \begin{split}
-\pi  -\log n! +&\log \left( \frac{2}{ e\pi}\right) \frac{n}{2} -\frac{n}{2}\log n+ 
\log\left(1-\frac{1}{2\pi}\right)
 \leq \\ 
 & \hat h^0(\overline E)  +\sum_{i=1}^n\log  \min(\lambda_i(\overline E),1)\leq \pi-
\log\left(1-\frac{1}{2\pi}\right) +n\log n,
 \end{split}
 \]
 where $n$ is the rank of $E$.
 \end{theorem}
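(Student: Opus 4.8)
The plan is to deduce the estimate from Proposition~\ref{strictineq1}, the Poisson identity \eqref{LogPoisson}, a Gram--Schmidt bound for theta series, Minkowski's second theorem and the Stirling bounds \eqref{stirling}, after reducing to the case in which every successive minimum of $\overline{E}$ is $\le 1$. First I would set up that reduction. Pick linearly independent $v_1,\dots,v_n\in E$ with $\|v_i\|=\lambda_i(\overline{E})$, let $k$ be the number of indices $i$ with $\lambda_i(\overline{E})\le 1$, and put $\overline{E}_k:=E\cap\mathrm{Span}_{\R}(v_1,\dots,v_k)$, a sublattice of rank $k$. One checks directly that $\lambda_i(\overline{E}_k)=\lambda_i(\overline{E})$ for $i\le k$; that any $v\in E$ with $\|v\|\le1$ already lies in $\overline{E}_k$ (otherwise $v_1,\dots,v_k,v$ would be $k+1$ independent vectors of norm $\le1$, forcing $\lambda_{k+1}(\overline{E})\le1$), so $\widehat{h}^0(\overline{E})=\widehat{h}^0(\overline{E}_k)$; and $\sum_{i=1}^n\log\min(\lambda_i(\overline{E}),1)=\sum_{i=1}^k\log\lambda_i(\overline{E}_k)$. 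Since the claimed lower bound is nonincreasing in the rank and the claimed upper bound nondecreasing, it suffices to prove the statement for $\overline{E}_k$; the case $k=0$ is trivial, so from here on I assume $\lambda_i(\overline{E})\le1$ for all $i$, whence $\sum_i\log\min(\lambda_i,1)=\log\prod_i\lambda_i$.

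For the lower bound, let $\Lambda'\subseteq E$ be the sublattice generated by $v_1,\dots,v_n$; since $[E:\Lambda']\ge1$ and by Hadamard's inequality, $\mathrm{covol}(\overline{E})\le\mathrm{covol}(\Lambda')\le\prod_i\|v_i\|=\prod_i\lambda_i$, i.e.\ $\log\prod_i\lambda_i\ge-\widehat{\deg}(\overline{E})$. From \eqref{LogPoisson} together with $h^1_\theta(\overline{E})=\log\theta_{\overline{E}^\vee}(1)\ge0$ (the $v=0$ term) one gets $h^0_\theta(\overline{E})\ge\widehat{\deg}(\overline{E})$, while Proposition~\ref{strictineq1} gives $\widehat{h}^0(\overline{E})\ge h^0_\theta(\overline{E})-\tfrac{n}{2}\log n+\log(1-\tfrac{1}{2\pi})$. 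Adding these three inequalities, the terms $\pm\widehat{\deg}(\overline{E})$ cancel and
\[
\widehat{h}^0(\overline{E})+\log\textstyle\prod_{i=1}^{n}\lambda_i\ \ge\ -\tfrac{n}{2}\log n+\log(1-\tfrac{1}{2\pi}),
\]
which is already at least the claimed lower bound, since $-\pi-\log n!+\tfrac n2\log\tfrac{2}{e\pi}\le0$ (the exponent $\tfrac{2}{e\pi}<1$).

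For the upper bound I would use the Gram--Schmidt estimate $\theta_{\overline{E}}(1)\le\prod_{i=1}^n\theta_{\overline{\Z}}(\|b_i^\ast\|^2)$, valid for the orthogonalization $b_i^\ast$ of any $\Z$-basis $b_1,\dots,b_n$ of $E$: summing over the Gram--Schmidt coordinate of each $b_i$ one at a time, the innermost sum has the form $\sum_{m\in\Z}e^{-\pi t(m+\alpha)^2}\le\theta_{\overline{\Z}}(t)$, which follows from Poisson summation on $\Z$. Taking $b_1,\dots,b_n$ adapted to the flag $\overline{E}_1\subset\cdots\subset\overline{E}_n=\overline{E}$ of the $v_i$'s, and writing $v_i=\sum_{j\le i}c_jb_j$ with $c_j\in\Z$, one has $c_i\ne0$ and the orthogonal projection of $v_i$ onto the line $\R b_i^\ast$ equals $c_ib_i^\ast$, so $\|b_i^\ast\|\le|c_i|\,\|b_i^\ast\|\le\|v_i\|=\lambda_i\le1$. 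Then Lemma~\ref{lemma91224} gives $\theta_{\overline{\Z}}(\|b_i^\ast\|^2)\le (3/2)\,\|b_i^\ast\|^{-1}$, hence $\theta_{\overline{E}}(1)\le(3/2)^n\prod_i\|b_i^\ast\|^{-1}=(3/2)^n/\mathrm{covol}(\overline{E})$, i.e.\ $h^0_\theta(\overline{E})\le n\log(3/2)-\widehat{\deg}(\overline{E})$. Combining this with $\widehat{h}^0(\overline{E})\le h^0_\theta(\overline{E})+\pi$ and with Minkowski's second theorem $\prod_i\lambda_i\le 2^n\,\mathrm{covol}(\overline{E})/v_n$, the terms $\pm\widehat{\deg}(\overline{E})$ again cancel and
\[
\widehat{h}^0(\overline{E})+\log\textstyle\prod_{i=1}^{n}\lambda_i\ \le\ \pi+n\log 3-\log v_n\ \le\ \pi+\tfrac n2\log n+n\log\tfrac32,
\]
using \eqref{stirling} in the last step. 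Finally $\tfrac n2\log n+n\log\tfrac32\le n\log n-\log(1-\tfrac1{2\pi})$ for $n\ge3$, which gives the claimed bound; the cases $n=1,2$ are verified directly (for rank one, $\widehat{h}^0(\overline{\Z}_t)+\log\min(t,1)\le\log(2+t)\le\log 3$).

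The only genuinely nonformal step is the flag-adapted basis estimate $\|b_i^\ast\|\le\lambda_i$; once the flag $\overline{E}_i=E\cap\mathrm{Span}_\R(v_1,\dots,v_i)$ is in place this is elementary, and everything else is bookkeeping together with the direct check of the few smallest ranks where the clean inequality $\tfrac n2\log n+n\log\tfrac32\le n\log n$ fails. (Note the argument actually yields the sharper lower bound $-\tfrac n2\log n+\log(1-\tfrac1{2\pi})$, matching Theorem~\ref{thmOrthog}.)
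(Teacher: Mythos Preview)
Your argument is correct (apart from a harmless sign slip: the line ``$h^0_\theta(\overline{E})\le n\log(3/2)-\widehat{\deg}(\overline{E})$'' should have $+\widehat{\deg}(\overline{E})$; your next line and final bound $\pi+n\log 3-\log v_n$ are consistent with the correct sign, so this is only a typo). The route, however, is genuinely different from the paper's.

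\emph{Reduction.} The paper passes to the sublattice $E_0$ \emph{generated} by the open unit ball and treats the boundary case $\lambda_1=1$ by a limiting argument with $\overline{E}_t$, $t\to 1$. You instead take the \emph{saturated} sublattice $E_k=E\cap\mathrm{Span}_\R(v_1,\dots,v_k)$ determined by the minima $\le 1$; this avoids the limit and gives the identities $\widehat h^0(\overline{E})=\widehat h^0(\overline{E}_k)$ and $\lambda_i(\overline{E}_k)=\lambda_i(\overline{E})$ directly.

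\emph{Lower bound.} The paper combines the Minkowski lower inequality $\tfrac{2^{n_0}}{n_0!}\le\prod\lambda_i\cdot v_{n_0}/\mathrm{covol}$ with the dual estimate $\theta_{\overline{E}_0^\vee}(1)\ge e^{-\pi}$ (from $\widehat h^1(\overline{E}_0)=0$ and Proposition~\ref{strictineq1}) and Stirling. You bypass all of this with Hadamard's inequality $\mathrm{covol}(\overline{E})\le\prod\lambda_i$ and the trivial bound $h^1_\theta\ge 0$; this is both more elementary and sharper, yielding $-\tfrac n2\log n+\log(1-\tfrac{1}{2\pi})$ as you note.

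\emph{Upper bound.} The paper uses $\widehat h^1(\overline{E}_0)=0$ again, feeding the resulting bound $\theta_{\overline{E}_0^\vee}(1)\le\tfrac{2\pi}{2\pi-1}n_0^{n_0/2}$ through Poisson and the Minkowski upper inequality. You instead invoke the Gram--Schmidt (Fubini) bound $\theta_{\overline{E}}(1)\le\prod_i\theta_{\overline{\Z}}(\|b_i^\ast\|^2)$ together with a flag-adapted basis giving $\|b_i^\ast\|\le\lambda_i$, then Lemma~\ref{lemma91224}. This is an independent technique, standard in lattice analysis but not used elsewhere in the paper; it trades the duality argument for a direct primal estimate. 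Both then finish with Minkowski and Stirling. Your residual check for $n=1$ is necessary and is handled correctly.
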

 
 \begin{proof}

Let \(E_0\) be the \(\mathbb{Z}\)-submodule of \(E\) generated by the elements of the set \(E \cap \{m \in E_\mathbb{R} \mid \|m\| < 1\}\). Let \(\overline{E}_0\) denote the Euclidean lattice \(E_0\) equipped with the induced Euclidean norm from \(\overline{E}\). It is clear that:
\[
\widehat{h}^1(\overline{E}_0) = 0.
\]

Now, consider the case where \(\lambda_1(\overline{E}) > 1\). In this situation, we observe that:
\[
\widehat{h}^0(\overline{E}) = 0.
\]
Thus, the theorem holds true in this case.\\

Let us consider the case where \(\lambda_1(\overline{E}) < 1\). In this case, the submodule \(E_0\) has positive rank. It is straightforward to see that \(\prod_i \lambda_i(\overline{E}_0) = \prod_i \min(\lambda_i(\overline{E}), 1)\). From equation \eqref{ArTheta}, we obtain the bounds:
\[
e^{-\pi} \leq \theta_{\overline{E}_0^\vee}(1) \leq \frac{2\pi}{2\pi - 1} n_0^{n_0/2},
\]
where \(n_0\) is the rank of \(E_0\).\\

By Minkowski's theorem on successive minima (\cite[Theorem 1, p. 59, Theorem 2, p. 62]{Gruber})
\[
\frac{2^{n_0}}{n_0!} \leq \lambda_1(\overline E_0)\cdots \lambda_{n_0}(\overline E_0) \, \mathrm{vol}(B(E_0,\|\cdot\|))\leq 2^{n_0}.
\]
 where  
 $\mathrm{vol}(\cdot)$ is the volume function with respect to the Lebesgue measure that gives  volume 
 $1$ to $E_\R/E$.  Note that  $
\theta_{\overline E_0}(1) \prod_i \lambda_i(\overline E_0)=\frac{
\prod_i \lambda_i(\overline E_0)}{\mathrm{covol}(\overline E_0)} 
\theta_{\overline E_0^\vee}(1).$

We conclude that
 \[
e^{-\pi} \frac{2^{n_0}}{n_0!v_{n_0} }   \leq \theta_{\overline E_0}(1) \prod_i \lambda_i(\overline E_0)\leq 
 \frac{2\pi }{2\pi -1} n_0^{n_0/2} \frac{2^{n_0}}{v_{n_0}}.
\]
We use \eqref{stirling} to obtain that
  \[
e^{-\pi } \frac{1}{n!}\left( \frac{2}{ e\pi}\right)^{n/2} 
\leq 
 \theta_{\overline E_0}(1) \prod_i \lambda_i(\overline E_0)\leq  \frac{2\pi }{2\pi -1} n^{n}.
\]
Note  that $2^n/n!$ is a decreasing function. \\

Since   $\{m\in E_\R\mid \|m\|<1\}\cap E=B_{\overline E_0}(0,1)\cap E_0$, we can  use  Proposition \ref{strictineq1} to deduce that
 \begin{equation}\label{eq281224}
e^{-\pi } \frac{1}{n!}\left( \frac{2}{ e\pi}\right)^{n/2} n^{-n/2} (1-\frac{1}{2\pi}) \leq  \# (B_{\overline E}(0,1)\cap E) \prod_i \min(\lambda_i(\overline E),1)\leq e^{\pi } \frac{2\pi }{2\pi -1}n^{n}. 
 \end{equation}

It remains to consider the case  when $\lambda_1(\overline E)=1$. We see that \eqref{eq281224} holds for $\overline E_t$ with   $t\in (0,1)$. By letting 
$t\rightarrow 1$, we conclude that \eqref{eq281224} holds for $\overline E$. This ends the proof of the theorem.

     \end{proof}

 \begin{corollary}\label{cor1}
 
 Let $\overline E$ be a normed lattice of rank $n$. We have 
  \[
  \begin{split}
  -\pi  -\log n! +\frac{n}{2} & \log \left( \frac{2}{ e\pi}\right)  - n\log n + 
\log\left(1-\frac{1}{2\pi}\right)\\
& \leq 
  \hat h^0(\overline E)+\sum_{i=1}^n \log \min(\lambda_i(\overline E),1)\leq
 2 \pi -2\log\left(1-\frac{1}{2\pi}\right)+\frac{3n}{2}\log n.
 \end{split}
  \]

 \end{corollary}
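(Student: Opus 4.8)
The plan is to reduce the general normed $\Z$-module case to the Euclidean case already handled in Theorem \ref{thm181224}, using the John norm comparison exactly as in the proof of Theorem \ref{main}. First I would recall that for a normed $\Z$-module $\overline E=(E,\|\cdot\|)$ of rank $n$ there is a Euclidean norm $\|\cdot\|_J$ on $E$ with $\|\cdot\|\leq \|\cdot\|_J\leq n^{1/2}\|\cdot\|$, so that $\overline E_J=(E,\|\cdot\|_J)$ is a Euclidean lattice sandwiching $\overline E$. From this sandwich one gets, as in the proof of Theorem \ref{main}, the two-sided bound
\[
\widehat h^0(\overline E_J)\leq \widehat h^0(\overline E)\leq \widehat h^0\bigl((\overline E_J)_{n^{-1/2}}\bigr),
\]
and one must track how the successive minima transform: from the norm inequalities we have $\lambda_i(\overline E_J)\leq n^{1/2}\lambda_i(\overline E)$ and $\lambda_i\bigl((\overline E_J)_{n^{-1/2}}\bigr)=n^{-1/2}\lambda_i(\overline E_J)\leq \lambda_i(\overline E)$, together with $\lambda_i(\overline E)\leq \lambda_i(\overline E_J)$.

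Next I would apply Theorem \ref{thm181224} to the two Euclidean lattices $\overline E_J$ and $(\overline E_J)_{n^{-1/2}}$ to bound $\widehat h^0(\overline E_J)+\sum_i\log\min(\lambda_i(\overline E_J),1)$ from below and $\widehat h^0\bigl((\overline E_J)_{n^{-1/2}}\bigr)+\sum_i\log\min(\lambda_i((\overline E_J)_{n^{-1/2}}),1)$ from above. Combining these with the $\widehat h^0$-sandwich above gives control of $\widehat h^0(\overline E)$ up to an error that accounts for the discrepancy between $\sum_i\log\min(\lambda_i(\overline E),1)$ and the corresponding sums for the two auxiliary Euclidean lattices. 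The key elementary estimate is that for $t\in(0,1]$ and any real $\lambda>0$ one has $\log\min(t\lambda,1)-\log\min(\lambda,1)\geq \log t$ and $\leq 0$; applying this with $t=n^{-1/2}$ shows that passing between $\lambda_i(\overline E)$, $\lambda_i(\overline E_J)$, and $\lambda_i((\overline E_J)_{n^{-1/2}})$ in the relevant sums costs at most $\tfrac12 n\log n$ in each direction. Feeding these discrepancies into the bounds from Theorem \ref{thm181224} — whose own error terms already contain $n\log n$ and $\tfrac12 n\log n$ — produces the asserted inequalities, with the upper constant $\tfrac{3n}{2}\log n$ arising as (bound $n\log n$ from Theorem \ref{thm181224}) plus ($\tfrac12 n\log n$ from the successive-minima comparison), and the doubling of the constant $-2\log(1-\tfrac1{2\pi})$ and $2\pi$ reflecting that the argument is applied twice (once for each side, and once more through the $v_n$ estimate \eqref{stirling}).

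The main obstacle I anticipate is purely bookkeeping: making sure the successive-minima comparison inequalities are applied in the correct direction on each side of the estimate, and that $\min(\cdot,1)$ truncation interacts correctly with the rescaling $\overline E_J\mapsto(\overline E_J)_{n^{-1/2}}$ — in particular, that rescaling by $n^{-1/2}$ can only increase the number of indices $i$ with $\lambda_i<1$, never decrease it, so the truncated product over $E_0$ in Theorem \ref{thm181224} behaves monotonically. Once the directions of all inequalities are pinned down, the corollary follows by assembling the pieces and absorbing lower-order terms; no genuinely new idea beyond the John-norm trick of Theorem \ref{main} and the Euclidean estimate of Theorem \ref{thm181224} is required.
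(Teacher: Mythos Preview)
Your approach is correct and, for the lower bound, essentially identical to the paper's: both use $\widehat h^0(\overline E)\geq \widehat h^0(\overline E_J)$ together with $\min(\lambda_i(\overline E),1)\geq n^{-1/2}\min(\lambda_i(\overline E_J),1)$ and then invoke Theorem~\ref{thm181224} for $\overline E_J$.

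For the upper bound your route differs slightly from the paper's and is in fact cleaner. You apply Theorem~\ref{thm181224} directly to the Euclidean lattice $(\overline E_J)_{n^{-1/2}}$ and then compare successive minima; carrying this out as you describe actually yields the constant $\pi-\log(1-\tfrac{1}{2\pi})$ rather than the doubled constant in the statement. The paper instead passes through the theta function: it bounds $\widehat h^0(\overline E)\leq \widehat h^0((\overline E_J)_{n^{-1/2}})\leq e^\pi\theta_{(\overline E_J)_{n^{-1/2}}}(1)$ via Proposition~\ref{strictineq1}, uses the monotonicity of $t\mapsto\log\theta_{\overline E_J}(t)+\tfrac{n}{2}\log t$ to extract $n^{n/2}$, and then converts back from $\theta_{\overline E_J}(1)$ to a lattice-point count before applying Theorem~\ref{thm181224}. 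This double passage through Proposition~\ref{strictineq1} is what produces the $2\pi$ and $-2\log(1-\tfrac{1}{2\pi})$. So your remark that the doubling comes from ``applying the argument twice'' is morally right for the paper's proof, but your own sketch does not actually need it, and the reference to the $v_n$ estimate \eqref{stirling} is extraneous here since Stirling is already absorbed into Theorem~\ref{thm181224}.
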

 
 \begin{proof}
  Let $\overline E$ be a normed lattice. Let $\|\cdot\|_J$ be the  John norm on $E_\R$ that satisfies 
 \[
\|\cdot\|\leq \|\cdot\|_J\leq {n}^{\frac{1}{2}}\|\cdot\|.
\]
Let us denote by $\overline E_J$ the Euclidean lattice $E$ endowed with
$\|\cdot\|_J$.

We have 
  \[
  \begin{split}  \# (B(E,\|\cdot\|)\cap E) \prod_i & \min(\lambda_i (\overline E),1)
\leq   \# (B (E, \tfrac{1}{\sqrt{n} }\|\cdot\|_J )\cap E) \prod_i \min(\lambda_i(\overline E_{J}),1)   \\
  \leq & e^{\pi} \theta_{(\overline{E}_{J})_{ \frac{1}{\sqrt{n}} }}(1)  \prod_i \min(\lambda_i(\overline E_{J}),1)\\
  \leq & e^{\pi} n^{\frac{n}{2}} \theta_{\overline E_{J}}(1)  \prod_i \min(\lambda_i(\overline E_{J}),1)\\
  \leq & n^{n/2} e^{\pi} \frac{2\pi}{2\pi -1} \# (B(E,\|\cdot\|_J)\cap E )  \prod_i \min(\lambda_i(\overline E_{J}),1)  \\
  \leq & e^{2\pi} \left(\frac{2\pi}{2\pi-1} \right)^2 n^{\frac{3n}{2}},
 \end{split}
 \]
 where we have used Theorem \ref{thm181224} and that  $t\mapsto \log \theta_{\overline E}(t)+\frac{n}{2}\log t$ is an increasing function.\\
 
Note that $\min(\lambda_i(\overline E),1)\geq \frac{1}{\sqrt{n}} \min(\lambda_i(\overline E_J),1)$ for every $i=1,\ldots,n$. We deduce that
\[
 \hat h^0(\overline E_J)+\sum_i \log \min(\lambda_i(\overline E),1) -\frac{n}{2}\log n \leq \hat h^0(\overline E)+\sum_i \log \min(\lambda_i(\overline E),1). 
\]
Using Theorem \ref{thm181224} once again, we derive the following inequality:
\[
-\pi - \log n! + \frac{n}{2} \log \left( \frac{2}{e\pi} \right) - n\log n + 
\log \left( 1 - \frac{1}{2\pi} \right) \leq \hat{h}^0(\overline{E}) + \sum_i \log \min(\lambda_i(\overline{E}), 1).
\]

This concludes the proof of the corollary.
 
 \end{proof}

\subsection{Arihtmetic bigness}\label{AB}
Let $\X$ be an arithmetic variety over $\Z$ of dimension $n+1$ and such that
$\X_\Q$ is smooth. Let
$\overline{\LL}{}=(\LL,\|\cdot\|_{\overline\LL})$ be a  smooth Hermitian  line bundle on $\X$.
  For any $k\in \N$, we write 
  $k\overline \LL:={\overline \LL}^{\otimes k}$,  we  let  $n_k$ denote    the rank of $H^0(\X,k\LL)$.
We set $X:=\X(\C)$, and $L:=\mathcal{L}(\C)$.   Let 
$\mu$ be a smooth volume form on $\LL$. The space of global sections $H^0(X,L) $ is endowed with the $L^2$-norm
\[
\|s\|_{L^2,\overline \LL}^2:=\int_X \|s(x)\|_{\overline \LL}^2\mu\quad \text{for any $s\in H^0(X,L)$.}
\]

Also we consider the sup norm defined as follows
\[
\|s\|_{\sup,  \overline \LL}:=\sup_{x\in X}\|s(x)\|_{\overline \LL}\quad \text{for any $s\in H^0(X,L)$.}
\]
For an introduction to Arakelov geometry, see \cite{Soule}.\\

There are several notions of arithmetic positivity for a Hermitian line bundle on an arithmetic variety. We refer the reader to \cite{Moriwaki2, ZhangPositive}, or to \cite[p.~227]{Burgos3} for a detailed discussion of these concepts.

A Hermitian line bundle $\overline{\mathcal{L}}$ on $\mathcal{X}$ is said to be \emph{big} if:
\begin{itemize}
    \item The generic fiber $\mathcal{L}_\mathbb{Q}$ is big,\footnote{That is, $\mathrm{vol}(\mathcal{L}_\mathbb{Q}) > 0$, which by definition means

    \[
    \limsup_{k\to\infty} \frac{h^0(\mathcal{X}_\mathbb{Q},\,k\mathcal{L}_\mathbb{Q})}{k^n/n!} > 0,
    \]
    where $n = \dim \mathcal{X}_\mathbb{Q}$.}
    \item There exists a positive integer $k$ and a nonzero section $s \in H^0(\mathcal{X},\,k\mathcal{L})$ such that $\|s\|_{\sup,\,k\overline{\mathcal{L}}} < 1$.
\end{itemize}

A Hermitian line bundle $\overline{ \mathcal A}$ is said to be ample if: 
\begin{itemize}

    \item $\mathcal{L}$ is ample on $\mathcal{X}$,

    \item The first Chern form $c_1(\overline{\mathcal{L}})$ is positive on $\mathcal{X}(\mathbb{C})$, and
    \item For a sufficiently large integer $k$, the space $H^0(\mathcal{X}, k\mathcal{L})$ is generated by the set

\[
    \{s \in H^0(\mathcal{X}, k\mathcal{L}) \mid \|s\|_{\sup, k\overline{\LL}} < 1\},\]
    as a $\mathbb{Z}$-module.
\end{itemize}

Following the convention in \cite[Convention~9, p.~411]{Moriwaki2}, we write \(\overline{\LL} \leq \overline{\mathcal M}\) if there is an injective homomorphism \(\phi: \LL \rightarrow \mathcal M\) such that \(\|\phi_{\mathbb{C}}(\cdot)\|_{\mathcal M} \leq \|\cdot\|_{\LL}\) on \(\mathcal{X}(\mathbb{C})\), where \(\|\cdot\|_{\LL}\) and \(\|\cdot\|_{\mathcal M}\) are the Hermitian norms associated to \(\overline{\LL}\) and \(\overline{\mathcal M}\), respectively.

\begin{lemma}\label{lm4.4}
Let $\overline{\LL} = (\LL, \|\cdot\|)$ be a big Hermitian line bundle on $\mathcal{X}$. Then there exists a positive integer $\ell_0$ such that

\[
\widehat{h}^0\left(\overline{H^0(\mathcal{X}, \ell \LL)}_{\sup,\, \ell\overline\LL}\right)\neq 0
\quad \text{for all } \ell\geq \ell_0.
\]
\end{lemma}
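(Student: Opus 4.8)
The plan is to reduce the statement to an elementary fact about subadditivity of the ranks $n_k = h^0(\X_\Q, k\LL_\Q)$ combined with the existence of a single small section guaranteed by bigness. Since $\overline{\LL}$ is big, by definition $\LL_\Q$ is big, so $n_k/(k^n/n!)$ is bounded below by a positive constant for all large $k$, and in particular $n_k \to \infty$; moreover there is a fixed $k_1 \geq 1$ and a nonzero section $s_1 \in H^0(\X, k_1\LL)$ with $\|s_1\|_{\sup,\, k_1\overline{\LL}} < 1$. First I would use $s_1$ to produce, for every multiple $\ell = k_1 m$, a large supply of small sections: multiplication by $s_1^{\otimes m}$ gives an injection $H^0(\X, r\LL) \hookrightarrow H^0(\X, (r + k_1 m)\LL)$, and since $\|s_1^{\otimes m}\|_{\sup} = \|s_1\|_{\sup}^m < 1$, the supremum norm is multiplicative under tensor product of sections, this map sends sections of norm $\leq 1$ to sections of norm $< 1$. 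Applying this with $r$ fixed (say $r = 0$, i.e. starting from the constant section $1 \in H^0(\X,\mathcal O_\X) = \Z$, rescaled if necessary) already shows $\widehat h^0\bigl(\overline{H^0(\X,\ell\LL)}_{\sup,\,\ell\overline{\LL}}\bigr) > 0$ for every $\ell$ divisible by $k_1$.

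To handle \emph{all} sufficiently large $\ell$, not merely multiples of $k_1$, I would combine small sections coming from different residues. For each $j = 0, 1, \dots, k_1 - 1$ with $H^0(\X, j\LL) \neq 0$, pick any nonzero $t_j \in H^0(\X, j\LL)$; then for $\ell = j + k_1 m$ the section $t_j \otimes s_1^{\otimes m}$ lies in $H^0(\X, \ell\LL)$ and has supremum norm $\|t_j\|_{\sup,\, j\overline\LL}\cdot\|s_1\|_{\sup,\,k_1\overline\LL}^{m}$, which tends to $0$ as $m \to \infty$ because $\|s_1\|_{\sup} < 1$; hence it is a nonzero small section once $m$ is large enough, say $m \geq m_j$. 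The only residues $j$ to worry about are those with $H^0(\X,j\LL) = 0$, but since $\LL_\Q$ is big there is some $k_0$ with $H^0(\X,k\LL)\neq 0$ for all $k \geq k_0$ (indeed $n_k \to \infty$); so for $\ell \geq k_0 + k_1$ we may always write $\ell = j + k_1 m$ with $k_0 \leq j < k_0 + k_1$ and $m \geq 1$, and then enlarge the threshold to absorb the finitely many $m_j$. Setting $\ell_0 := k_0 + k_1 + k_1\max_j m_j$ then gives a nonzero section of $\ell\overline{\LL}$ with supremum norm $< 1$ for every $\ell \geq \ell_0$, i.e. $\widehat h^0\bigl(\overline{H^0(\X,\ell\LL)}_{\sup,\,\ell\overline\LL}\bigr) \neq 0$.

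The only genuinely delicate point is the multiplicativity of the supremum norm under tensor product of global sections, namely $\|s \otimes s'\|_{\sup,\,(k+k')\overline\LL} = \|s\|_{\sup,\,k\overline\LL}\,\|s'\|_{\sup,\,k'\overline\LL}$, which is immediate from the definition $\|s\otimes s'(x)\|_{(k+k')\overline\LL} = \|s(x)\|_{k\overline\LL}\,\|s'(x)\|_{k'\overline\LL}$ pointwise on $X = \X(\C)$ and taking suprema — the crucial inequality $\|s\otimes s'\|_{\sup} \leq \|s\|_{\sup}\|s'\|_{\sup}$ suffices for our purposes and needs no compactness beyond that of $X$, which holds since $\X$ is projective. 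Everything else is bookkeeping with arithmetic progressions of exponents; no lattice-point estimate from the earlier sections is actually required here, although one could alternatively invoke Proposition \ref{strictineq1} to phrase the conclusion in terms of $h^0_\theta$ of the associated Euclidean lattice if a more quantitative lower bound were desired.
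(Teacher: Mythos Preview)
Your argument is correct and takes a more hands-on route than the paper's. The paper simply invokes the known fact that the sequence $\frac{1}{k}\log\lambda_1\bigl(\overline{H^0(\X,k\LL)}_{\sup,\,k\overline\LL}\bigr)$ converges (a Fekete-type consequence of the very submultiplicativity $\|s\otimes s'\|_{\sup}\le\|s\|_{\sup}\|s'\|_{\sup}$ that you isolate); since the limit equals the limit along multiples of $k_0$ and is therefore $\le\frac{1}{k_0}\log\alpha<0$, one gets $\lambda_1(\ell\overline\LL)<1$ for all large $\ell$ in a single stroke. You instead build small sections explicitly as products $t_j\otimes s_1^{\otimes m}$ and treat each residue class modulo $k_1$ separately. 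Your approach is more elementary and self-contained, at the cost of a little bookkeeping; the paper's is shorter but leans on an external convergence statement.

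Two minor points. First, the equality $\|s\otimes s'\|_{\sup}=\|s\|_{\sup}\|s'\|_{\sup}$ you write is generally false (the two suprema need not be attained at the same point of $X$); only the inequality $\le$ holds, and as you correctly note that is all you use. Second, the parenthetical ``indeed $n_k\to\infty$'' does not by itself justify that $H^0(\X,k\LL)\neq 0$ for \emph{all} large $k$: a priori bigness of $\LL_\Q$ gives only $\limsup n_k/k^n>0$. The standard reason the semigroup $\{k:n_k>0\}$ is cofinite is Kodaira's lemma: write $m\LL_\Q\sim A+D$ with $A$ ample and $D$ effective, and observe that for each $r=0,\dots,m-1$ one has $h^0((km+r)\LL_\Q)\ge h^0(kA+r\LL_\Q)>0$ once $k$ is large. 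This is routine, but since your construction of the $t_j$ depends on it, it is worth stating the actual reason.
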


\begin{proof}
Since $\overline{\LL}$ is big, there exist $k_0 \in \mathbb{N}$ and a nonzero section $s \in H^0(\mathcal{X}, k_0\LL)$ such that $\|s\|_{\sup,\, k_0\overline{\LL}} < \alpha$ for some real number $0<\alpha<1$.

It is known that the sequence

\[
\left(\frac{1}{k}\log \lambda_1\left(\overline{H^0(\mathcal{X}, k\LL)}_{\sup,\, k\overline{\LL}}\right)\right)_{k\in\mathbb{N}}
\]
converges to a finite limit as $k \to \infty$. In particular,

\[
\lim_{k\rightarrow \infty}\frac{1}{k}\log \lambda_1\left(\overline{H^0(\mathcal{X}, k\LL)}_{\sup,\, k\overline{\LL}}\right)
=
\lim_{k\rightarrow \infty}\frac{1}{k k_0}\log \lambda_1\left(\overline{H^0(\mathcal{X}, k k_0\LL)}_{\sup,\, k k_0\overline{\LL}}\right)
\leq \frac{1}{k_0}\log \alpha < 0.
\]
Let $0<\varepsilon<-\frac{1}{k_0}\log \alpha$. By the convergence above, there exists $\ell_0\in \mathbb{N}$ such that for all $\ell\geq \ell_0$,

\[
\frac{1}{\ell}\log \lambda_1\left(\overline{H^0(\mathcal{X}, \ell \LL)}_{\sup,\, \ell\overline{\LL}}\right)
\leq \lim_{k\to\infty}\frac{1}{k}\log \lambda_1(\overline{H^0(\mathcal{X}, k\LL)}_{\sup, k\overline{\LL}})+ \varepsilon
\leq  \frac{1}{k_0}\log \alpha + \varepsilon < 0.
\]
Thus, for all $\ell\geq \ell_0$, we have $\log \lambda_1\left(\overline{H^0(\mathcal{X}, \ell\LL)}_{\sup,\, \ell\overline{\LL}}\right)<0$, i.e.,

\[
\lambda_1\left(\overline{H^0(\mathcal{X}, \ell\LL)}_{\sup,\, \ell\overline{\LL}}\right)<1.
\]
So $\widehat{h}^0(\overline{H^0(\mathcal{X}, \ell\LL)}_{\sup, \ell\overline{\LL}})\neq 0$ for all $\ell\geq \ell_0$.
\end{proof}

\begin{proposition}\label{prop4.5.5}
Let $\overline{\mathcal{A}}$ be an ample Hermitian line bundle on $\mathcal{X}$. Then,

\[
 \liminf_{k \to \infty} \frac{\widehat{h}^0(\overline{H^0(\mathcal{X}, k\A)}_{\sup, k\overline{\A}})}{k^{n+1}/(n+1)!}>0.
\]

\end{proposition}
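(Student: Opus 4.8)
The plan is to reduce the statement about the arithmetic volume-type quantity $\widehat{h}^0$ for the ample line bundle $\overline{\A}$ to a statement about the geometric volume of $\A_\Q$ together with control on the successive minima, and then invoke the estimates already developed in this paper. First I would recall that ampleness of $\overline{\A}$ guarantees that for $k$ large the $\Z$-module $H^0(\X,k\A)$ is generated by the small sections $\{s \mid \|s\|_{\sup,k\overline{\A}}<1\}$; in particular all successive minima $\lambda_i$ of the normed $\Z$-module $\overline{H^0(\X,k\A)}_{\sup,k\overline{\A}}$ satisfy $\lambda_i \leq 1$, so that $\sum_{i=1}^{n_k}\log\min(\lambda_i,1) = \sum_i \log\lambda_i \leq 0$ and in fact the minima are bounded above, say by $1$, and bounded below by a quantity growing at most geometrically in $k$ (by the convergence of $\frac1k\log\lambda_1$ as in Lemma~\ref{lm4.4}, applied here with a uniform positive lower bound coming from ampleness).

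Next, I would apply Corollary~\ref{cor1} to $\overline{E} = \overline{H^0(\X,k\A)}_{\sup,k\overline{\A}}$, which has rank $n_k$. This gives
\[
\widehat{h}^0(\overline{E}) \geq -\sum_{i=1}^{n_k}\log\min(\lambda_i(\overline{E}),1) - \pi - \log n_k! + \tfrac{n_k}{2}\log\!\Big(\tfrac{2}{e\pi}\Big) - n_k\log n_k + \log\!\Big(1-\tfrac{1}{2\pi}\Big).
\]
Since all $\lambda_i \leq 1$, the term $-\sum_i\log\min(\lambda_i,1) = -\sum_i\log\lambda_i \geq 0$; more usefully, one shows it grows at least linearly in $k$ times $n_k$: ampleness ensures (after replacing $\overline{\A}$ by a large power and scaling, or by a direct argument using that a fixed power of $\overline{\A}$ has a strictly small section whose powers remain small) that $\lambda_1(\overline{E}) \leq e^{-ck}$ for some $c>0$ and all large $k$, hence $-\sum_i \log\lambda_i \geq -n_k\log\lambda_1 \geq c\,k\,n_k$. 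Combining, the dominant term on the right is $c\,k\,n_k$, while the negative contributions are $O(n_k\log n_k) + O(\log n_k!) = O(n_k\log n_k)$. Dividing by $k^{n+1}/(n+1)!$ and using that $n_k = h^0(\X_\Q,k\A_\Q) \sim \mathrm{vol}(\A_\Q)\,k^n/n!$ (ampleness of $\A$ on $\X$ makes $\A_\Q$ ample, hence big, so $\mathrm{vol}(\A_\Q)>0$), the leading term $c\,k\,n_k/(k^{n+1}/(n+1)!)$ tends to $(n+1)\,c\,\mathrm{vol}(\A_\Q) > 0$, while the error $O(n_k\log n_k)/k^{n+1}$ tends to $0$ because $n_k\log n_k = O(k^n\log k) = o(k^{n+1})$. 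This yields a strictly positive $\liminf$.

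The main obstacle I anticipate is making rigorous the claim that $\lambda_1\big(\overline{H^0(\X,k\A)}_{\sup,k\overline{\A}}\big)$ decays geometrically, i.e.\ $\limsup_k \frac1k\log\lambda_1 < 0$. The existence of a single strictly small section of some power $k_0\overline{\A}$ gives, by taking tensor powers $s^{\otimes m}$, small sections of $mk_0\overline{\A}$ with $\|s^{\otimes m}\|_{\sup} \leq \|s\|_{\sup}^m$, hence $\lambda_1(\overline{H^0(\X,mk_0\A)}_{\sup}) \leq \|s\|_{\sup}^m$, and then the (known) convergence of $\frac1k\log\lambda_1$ to a limit — which I would cite, as it is used already in Lemma~\ref{lm4.4} — propagates this geometric decay to all sufficiently large $k$. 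One must be slightly careful that this limit is genuinely negative and not merely $\le$ a negative number along a subsequence, but the convergence statement (not just $\liminf$) handles that, exactly as in the proof of Lemma~\ref{lm4.4}. A secondary technical point is ensuring $n_k > 0$ and $n_k\to\infty$ at the right rate, which follows from ampleness of $\A_\Q$; once these ingredients are in place the computation is routine.
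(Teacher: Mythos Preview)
Your argument has a genuine gap at the inequality $-\sum_i \log\lambda_i \geq -n_k\log\lambda_1$. Since $\lambda_1(\overline E) \leq \lambda_i(\overline E)$ for every $i$, one has $-\log\lambda_i \leq -\log\lambda_1$, so in fact $-\sum_i\log\lambda_i \leq -n_k\log\lambda_1$; the inequality points the wrong way. Knowing only that $\lambda_1$ decays geometrically and that all $\lambda_i \leq 1$ gives no lower bound of order $k\,n_k$ on $-\sum_i\log\lambda_i$: the higher minima could all sit just below $1$, making the sum $o(k\,n_k)$.

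What the paper does instead is control the \emph{last} successive minimum. Ampleness gives a single $k_0$ and a finite set $S \subset H^0(\X,k_0\A)$ of sections with $\sup$-norm $\leq e^{-\varepsilon}$ (for some $\varepsilon>0$) that generates the whole graded algebra $\bigoplus_m H^0(\X,mk_0\A)$. Products of $k$ elements of $S$ then span $H^0(\X,kk_0\A)$ and have $\sup$-norm $\leq e^{-\varepsilon k}$, so $\lambda_{n_{kk_0}} \leq e^{-\varepsilon k}$, and hence $-\sum_i\log\min(\lambda_i,1) \geq \varepsilon k\,n_{kk_0}$. Corollary~\ref{cor1} now gives the desired bound along the subsequence $kk_0$; the paper then passes to all large $m$ by writing $m = kk_0 + r + \ell_0$ and using Lemma~\ref{lm4.4}. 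The mechanism you invoke---convergence of $\frac1k\log\lambda_1$---is the right tool for $\lambda_1$, but there is no analogous shortcut for $\lambda_{n_k}$ without the algebraic generation argument.
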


\begin{proof}
Since $\overline{\mathcal{A}}$ is ample, there exists an integer $k_0 > 0$ such that the graded algebra $\bigoplus_{m \in \N} H^0(\mathcal{X}, m k_0 \mathcal{A})$ is generated by the set
\[
S := \left\{ s \in H^0(\mathcal{X}, k_0 \mathcal{A}) \;\middle|\; \|s\|_{\sup,\, k_0 \overline{\mathcal{A}}} < 1 \right\}.
\]
Define
\[
\varepsilon := -\sup_{s \in S} \log \|s\|_{\sup,\, k_0 \overline{\mathcal{A}}} > 0.
\]
By construction, for each $k \geq 1$, we can find a basis of $H^0(\mathcal{X}, k k_0 \mathcal{A})$ consisting of sections whose sup-norm is at most $e^{-\varepsilon k}$. Consequently, the $n_{k k_0}$-th successive minimum $\lambda_{n_{k k_0}}(k k_0 \overline{\mathcal{A}})$ satisfies

\[
\lambda_{n_{k k_0}}(k k_0 \overline{\mathcal{A}}) \leq e^{-\varepsilon k},
\]
where $n_k$ is the rank of $H^0(\mathcal{X}, k\mathcal{A})$.

By Corollary~\ref{cor1}, we get the following inequality:
\[
O(n_k \log n_k)+ \eps n_{kk_0} k  \leq \hat h^0\left((\overline{H^0(\mathcal{X}, k k_0 \mathcal{A})}_{\sup,\, k k_0 \overline{\mathcal{A}}}\right).
\]
Consequently, 
\[
\varepsilon(n+1)\frac{\mathrm{vol}(\A_\Q)}{k_0} \leq \liminf_{k \to \infty} \frac{\widehat{h}^0\left(\overline{H^0(\mathcal{X}, k k_0 \mathcal{A})}_{\sup,\, k k_0 \overline{\mathcal{A}}}\right)}{(k k_0)^{n+1}/(n+1)!},
\]
noting that $\mathcal{A}_{\mathbb{Q}}$ is big.

Applying Lemma~\ref{lm4.4} to \(\overline{\mathcal{A}}\), we obtain a positive integer \(\ell_0\) such that, for every \(\ell \geq \ell_0\), 
\[
\widehat{h}^0\left(\overline{H^0(\mathcal{X}, \ell \mathcal{A})}_{\sup,\, \ell \overline{\mathcal{A}}}\right)\neq 0.
\]
 Given $m \geq k_1 k_0 + k_0 + \ell_0$, write $
m = k k_0 + r + \ell_0,$ 
where $k \in \mathbb{N}$ and $r \in \{0, \ldots, k_0-1\}$. By Lemma~\ref{lm4.4}, we have
\[
k k_0 \overline{\mathcal{A}} \leq k k_0 \overline{\mathcal{A}} + r \overline{\mathcal{A}} + \ell_0 \overline{\mathcal{A}}.
\]

Therefore,
\[
\begin{split}
\frac{\widehat{h}^0\left(\overline{H^0(\mathcal{X}, m\mathcal{A})}_{\sup,\, m \overline{\mathcal{A}}}\right)}{m^{n+1}}
&\geq \frac{\widehat{h}^0\left(\overline{H^0(\mathcal{X}, k k_0\mathcal{A})}_{\sup,\, k k_0 \overline{\mathcal{A}}}\right)}{(k k_0)^{n+1}} \cdot \frac{(k k_0)^{n+1}}{(k k_0 + r + \ell_0)^{n+1}} \\
&\geq \frac{\widehat{h}^0\left(\overline{H^0(\mathcal{X}, k k_0\mathcal{A})}_{\sup,\, k k_0 \overline{\mathcal{A}}}\right)}{(k k_0)^{n+1}} \cdot \frac{(k k_0)^{n+1}}{((k+1)k_0 + \ell_0)^{n+1}}.
\end{split}
\]
It follows that

\[
\liminf_{m \to \infty} \frac{\widehat{h}^0(\mathcal{X}, m \overline{\mathcal{A}})}{m^{n+1}} > 0.
\]
This completes the proof.
\end{proof}

Moriwaki in \cite{Moriwaki1} introduced the \textit{arithmetic volume}  $\widehat{\mathrm{vol}}(\overline{{\LL}})$
 for  
 a Hermitian line bundle  $\overline{{\LL}}$  
  on arithmetic variety $\X$
    which is   an analogue of the geometric volume function. It is given as follows:
\[
\widehat{\mathrm{vol}}(\overline{{\mathcal{L}}}{})=
\underset{k\rightarrow \infty}{\limsup} \frac{\hat{h}^0(\overline{H^0(\mathcal{X},k \mathcal{L})}_{\sup,k\overline \LL}  
) 
}{k^{n+1}/(n+1)!} .
\]

Yuan \cite{YuanInventiones} employs the condition
 \[
 \underset{k\rightarrow \infty}{\liminf} \frac{\log \# \{ s\in H^0(\mathcal{X},k \mathcal{L}) \mid  \|s\|_{\sup, {k\overline \LL}}<1 \}  
}{k^{n+1}/(n+1)!}>0,
\]

as a definition of an arithmetic big Hermitian line bundle. Moriwaki \cite{Moriwaki2} proposed an alternative definition for arithmetic big line bundles:  $\overline{\LL}$ is said to be arithmetically big if $\LL_\Q$ is big and there exists a positive integer $k$ and a nonzero global section $s$ of $k\overline{\LL}$ such that $\|s\|_{\sup, k\overline{\LL}} < 1$.  He showed that Yuan's definition is equivalent to the existence of a nonzero section of a sufficiently high tensor power of $\LL$ with sup-norm less than $1$, and that $\LL_\Q$ is big.\\

The following theorem is an application of the theory developed in this paper.

\begin{theorem}\label{app}

We keep the same notations as in the beginning of this section. We have 
\begin{enumerate}
\item
{\small{
\[
 \underset{k\rightarrow \infty}{\limsup} \frac{\log \# \{ s\in H^0(\mathcal{X},k \mathcal{L}) \mid  \|s\|_{\sup, {k\overline \LL}}<1 \}  
}{k^{n+1}/(n+1)!}= \underset{k\rightarrow \infty}{\limsup} \frac{\log \# \{ s\in H^0(\mathcal{X},k \mathcal{L}) \mid  \|s\|_{\sup, {k\overline \LL}}\leq 1 \}  
}{k^{n+1}/(n+1)!}.
\]
}
}

\item
{\small{
\[
 \underset{k\rightarrow \infty}{\liminf} \frac{\log \# \{ s\in H^0(\mathcal{X},k \mathcal{L}) \mid  \|s\|_{\sup, {k\overline \LL}}<1 \}  
}{k^{n+1}/(n+1)!}= \underset{k\rightarrow \infty}{\liminf} \frac{\log \# \{ s\in H^0(\mathcal{X},k \mathcal{L}) \mid  \|s\|_{\sup, {k\overline \LL}}\leq 1 \}  
}{k^{n+1}/(n+1)!}.
\]
}}

\end{enumerate}

\end{theorem}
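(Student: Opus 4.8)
The plan is to show that the two counting functions differ only by a multiplicative factor that is subexponential in $k$, so that after taking $\log$ and dividing by $k^{n+1}/(n+1)!$ the discrepancy vanishes in the limit. Concretely, write $E_k := H^0(\mathcal X, k\mathcal L)$ equipped with the sup-norm $\|\cdot\|_{\sup, k\overline{\LL}}$, so that $E_k$ is a normed $\Z$-module of rank $n_k$. The obvious inequality
\[
\#\{s\in E_k \mid \|s\|_{\sup,k\overline\LL}<1\} \leq \#\{s\in E_k \mid \|s\|_{\sup,k\overline\LL}\leq 1\}
\]
gives one direction for both (1) and (2) immediately. For the reverse direction I would compare $\widehat h^0$ of the unit ball of $\overline{E_k}$ with that of a slightly shrunk ball: for $t>1$ one has
\[
\{s\in E_k\mid \|s\|_{\sup,k\overline\LL}\leq 1\} = \{s\in E_k\mid t\|s\|_{\sup,k\overline\LL}\leq t\} \supseteq \{s\in E_k\mid t\|s\|_{\sup,k\overline\LL}< 1\}
\]
is the wrong direction, so instead I compare the \emph{closed} ball of radius $1$ with the \emph{open} ball of radius $t$ for $t>1$: every $s$ with $\|s\|\leq 1$ satisfies $\|s\|_{(1/t)\cdot}<1$ after rescaling the norm by $1/t$, i.e.
\[
\#\{s\in E_k\mid \|s\|_{\sup,k\overline\LL}\leq 1\}\leq \#\{s\in E_k\mid \|s\|_{\sup,k\overline\LL}<t\} = \widehat h^0\text{-count of }(\overline{E_k})_{1/t}.
\]

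The key step is then to control $\widehat h^0\big((\overline{E_k})_{1/t}\big)$ against $\widehat h^0(\overline{E_k})$ uniformly. I would use Proposition~\ref{strictineq1} together with the scaling behavior of the theta series: by \eqref{thetaincreases} the function $u\mapsto \log\theta(u)+\tfrac{n_k}{2}\log u$ is nondecreasing, hence for $t>1$,
\[
h^0_\theta\big((\overline{E_k})_{1/t}\big) = \log\theta_{\overline{E_k}}(t^{-2}) \leq \log\theta_{\overline{E_k}}(1) + n_k\log t = h^0_\theta(\overline{E_k}) + n_k\log t,
\]
and combining with the two-sided comparison $|h^0_\theta - \widehat h^0|\leq \tfrac12 n_k\log n_k - \log(1-\tfrac1{2\pi})+\pi$ from Proposition~\ref{strictineq1} yields
\[
\widehat h^0\big((\overline{E_k})_{1/t}\big) - \widehat h^0(\overline{E_k}) \leq n_k\log t + n_k\log n_k + C
\]
for an absolute constant $C$. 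Since $n_k = h^0(\mathcal X_\Q, k\mathcal L_\Q) = O(k^n)$ and $\log n_k = O(\log k)$, the right-hand side is $O(k^n\log k)$, which is $o(k^{n+1})$. Therefore, for every fixed $t>1$,
\[
0\leq \log\#\{\|s\|\leq 1\} - \log\#\{\|s\|<1\} \leq \log\#\{\|s\|<t\}-\log\#\{\|s\|<1\} = O(k^n\log k),
\]
where I also use the monotonicity $\{\|s\|<1\}\subseteq\{\|s\|\leq 1\}\subseteq\{\|s\|<t\}$. Dividing by $k^{n+1}/(n+1)!$ and letting $k\to\infty$, the two limsups in (1) agree and the two liminfs in (2) agree.

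The main obstacle I anticipate is purely bookkeeping: one must be careful that the "shrink the ball" trick is applied in the right direction — replacing the closed unit ball by an open ball of radius $t>1$ rather than $t<1$ — and that the bound on $n_k$ genuinely comes from bigness/finiteness of $h^0(\mathcal X_\Q, k\mathcal L_\Q)$, which grows like $k^n$ by Hilbert–Samuel, one order below the $k^{n+1}$ normalization. A secondary subtlety is that the closed-ball count could in principle be infinite if $\mathcal L_\Q$ has sections of sup-norm exactly $1$; but $\{s : \|s\|\leq 1\}$ is the intersection of a lattice with a bounded set, hence finite, so this is not an issue. Once these points are handled, everything reduces to the uniform $O(k^n\log k)$ gap estimate above.
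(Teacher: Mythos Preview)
Your overall strategy --- bound the gap between the closed-ball and open-ball counts by $O(n_k\log n_k)$ via the theta invariant and the monotonicity of $u\mapsto\log\theta(u)+\tfrac{n_k}{2}\log u$, then kill it against $k^{n+1}$ using $n_k=O(k^n)$ --- is exactly the mechanism the paper uses. However, there is a genuine technical gap: you apply $h^0_\theta$, Proposition~\ref{strictineq1}, and the identity \eqref{thetaincreases} directly to $\overline{E_k}=(H^0(\mathcal X,k\mathcal L),\|\cdot\|_{\sup,k\overline\LL})$, but all of this machinery is only established for \emph{Euclidean} lattices. The theta series can of course be written down for any norm, but the monotonicity of $\log\theta(u)+\tfrac{n_k}{2}\log u$ comes from Poisson summation (that is precisely equation~\eqref{thetaincreases}), and the lower bound in Proposition~\ref{strictineq1} is derived from it as well. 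For the sup-norm neither step is available as stated.

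The paper repairs this in the obvious way: it first replaces $\|\cdot\|_{\sup,k\overline\LL}$ by its John ellipsoid norm $\|\cdot\|_{J_k}$, which is Euclidean and satisfies $n_k^{-1/2}\|\cdot\|_{J_k}\le\|\cdot\|_{\sup,k\overline\LL}\le\|\cdot\|_{J_k}$. Both the strict and the non-strict sup-norm counts are then sandwiched between the $J_k$-count at scale $1$ and at scale $n_k^{1/2}$, and your theta/scaling argument now applies legitimately to $\overline{E_k}_{J_k}$ with $t=n_k^{1/2}$, producing the same $O(n_k\log n_k)$ gap. Once you insert this John-norm step, your proof and the paper's become essentially identical; without it, the appeal to \eqref{thetaincreases} and Proposition~\ref{strictineq1} is unjustified.
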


\begin{proof}
Let us prove (1). (2) can be proved in a similar way.  Let $k$ be a positive integer. We denote by 
$\|\cdot\|_{J_k}$ the  John norm on $H^0(\X,k\LL)_\R$ satisfying 
\[
n_k^{-\frac{1}{2}  } \|\cdot\|_{J_k} \leq \|\cdot\|_{\sup,k\overline{\LL}} \leq \|\cdot\|_{J_k},
\]
where $n_k$ is the rank of $H^0(\X,k\LL)$.  So
{\small{
\[
\begin{split}
\underset{k\rightarrow \infty}{\limsup} \frac{\log \# \{ s\in H^0(\mathcal{X},k \mathcal{L}) \mid  \|s\|_{J_k }\leq 1 \}  
}{k^{n+1}/(n+1)!}  & \leq
\underset{k\rightarrow \infty}{\limsup} \frac{\log \# \{ s\in H^0(\mathcal{X},k \mathcal{L}) \mid  \|s\|_{\sup, {k\overline \LL}}\leq 1 \}  
}{k^{n+1}/(n+1)!}\\
&\leq \underset{k\rightarrow \infty}{\limsup} \frac{\log \# \{ s\in H^0(\mathcal{X},k \mathcal{L}) \mid  n_k^{-\frac{1}{2}}\|s\|_{J_k}\leq 1 \}  
}{k^{n+1}/(n+1)!}.
\end{split}
\]
}
}
Similarly, we get
{\small{
\[
\begin{split}
\underset{k\rightarrow \infty}{\limsup} \frac{\log \# \{ s\in H^0(\mathcal{X},k \mathcal{L}) \mid  \|s\|_{J_k }<1 \}  
}{k^{n+1}/(n+1)!}  & \leq
\underset{k\rightarrow \infty}{\limsup} \frac{\log \# \{ s\in H^0(\mathcal{X},k \mathcal{L}) \mid  \|s\|_{\sup, {k\overline \LL}}<1 \}  
}{k^{n+1}/(n+1)!}\\
&\leq \underset{k\rightarrow \infty}{\limsup} \frac{\log \# \{ s\in H^0(\mathcal{X},k \mathcal{L}) \mid  n_k^{-\frac{1}{2}}\|s\|_{J_k}<1 \}  
}{k^{n+1}/(n+1)!}.
\end{split}
\]
}}
On the other hand, we have
\[
h_\theta^0(\overline{H^0(\X,k\LL)}_{J_k})\leq h_\theta^0({\overline{(H^0(\X,k\LL)}_{J_k})}_{n_k^{-\frac{1}{2}} })\leq h_\theta^0(\overline{H^0(\X,k\LL)}_{J_k})-\frac{n_k}{4}\log n_k,
\]
where we have used the fact that $\log \theta_{\overline E}(t)+\frac{1}{2}\mathrm{rk}\, E\log t$ is a nondecreasing function, see
\eqref{thetaincreases}.  So
\begin{equation}\label{ro1}
\limsup_{k\rightarrow \infty} \frac{h_\theta^0(\overline{H^0(\X,k\LL)}_{J_k})}{k^{n+1}/(n+1)!}=\limsup_{k\rightarrow \infty} \frac{h_\theta^0((\overline{H^0(\X,k\LL)}_{J_k})_{n_k^{-\frac{1}{2}}})}{k^{n+1}/(n+1)!}.
\end{equation}

Combining the inequalities above with \eqref{ro1} and Proposition \ref{strictineq1}, we conclude that
{\small{
\[
 \underset{k\rightarrow \infty}{\limsup} \frac{\log \# \{ s\in H^0(\mathcal{X},k \mathcal{L}) \mid  \|s\|_{\sup, {k\overline \LL}}<1 \}  
}{k^{n+1}/(n+1)!}= \underset{k\rightarrow \infty}{\limsup} \frac{\log \# \{ s\in H^0(\mathcal{X},k \mathcal{L}) \mid  \|s\|_{\sup, {k\overline \LL}}\leq 1 \}  
}{k^{n+1}/(n+1)!}.
\]
}
}

This completes the proof of (1).

\end{proof}

From this theorem, we can deduce that \(\overline{\mathcal{L}}\) is arithmetically big in the sense of Yuan if and only if it is arithmetically big in the sense of Moriwaki. Indeed, let us explain this in detail.

Let \(\overline{\mathcal{L}}\) be a big Hermitian line bundle on \(\mathcal{X}\) in the sense of Moriwaki. Let \(\overline{\mathcal{A}}\) be an ample Hermitian line bundle on $\X$. By the argument in \cite[p.~445]{Moriwaki2}, there exists a positive integer \(p\) such that
\[
p\overline{\mathcal{L}} \geq \overline{\mathcal{A}}.
\]
This implies the following bound:
\[
\liminf_{k \to \infty} \frac{\widehat{h}^0\left(\overline{H^0(\mathcal{X}, pk\mathcal{L})}_{\sup, pk\overline{\mathcal{L}}}\right)}{(pk)^{n+1}}
\geq \frac{1}{p^{n+1}} 
\liminf_{k \to \infty} \frac{\widehat{h}^0\left(\overline{H^0(\mathcal{X}, k\mathcal{A})}_{\sup, k\overline{\mathcal{A}}}\right)}{k^{n+1}}.
\]

By Proposition~\ref{prop4.5.5}, we know that

\[
\liminf_{k \to \infty} \frac{\widehat{h}^0\left(\overline{H^0(\mathcal{X}, k\mathcal{A})}_{\sup, k\overline{\mathcal{A}}}\right)}{k^{n+1}} > 0,
\]
since \(\overline{\mathcal{A}}\) is ample.

Then
\[
\liminf_{k \to \infty} \frac{\widehat{h}^0\left(\overline{H^0(\mathcal{X}, pk\mathcal{L})}_{\sup, pk\overline{\mathcal{L}}}\right)}{(pk)^{n+1}} > 0.
\]
Arguing as in the proof of Proposition \ref{prop4.5.5}, we deduce that
\[
\liminf_{k \to \infty} \frac{\widehat{h}^0\left(\overline{H^0(\mathcal{X}, k\mathcal{L})}_{\sup, k\overline{\mathcal{L}}}\right)}{k^{n+1}} > 0.
\]
Using Theorem \ref{app}, we conclude that $\overline \LL$ is arithmetically big in the sense of Yuan.\\

\vskip 0.5cm

Now, let us suppose that

\[
 \liminf_{k \to \infty} \frac{\log \# \{ s \in H^0(\mathcal{X}, k \mathcal{L}) \mid  \|s\|_{\sup, {k\overline{\mathcal{L}}}} < 1 \}}{k^{n+1}/(n+1)!} > 0.
\]
This assumption implies the existence of a positive constant \( c \) and a positive integer \( k_0 \) such that
\begin{equation}\label{c}
\log \# \{ s \in H^0(\mathcal{X}, k \mathcal{L}) \mid  \|s\|_{\sup, {k\overline{\mathcal{L}}}} < 1 \} \geq c k^{n+1} \quad \text{for all } k \geq k_0.
\end{equation}
Consequently, there exists a nonzero section \( s \in H^0(\mathcal{X}, k_0 \mathcal{L}) \) satisfying
\[
\|s\|_{\sup, k_0 \overline{\mathcal{L}}} < 1.
\]
Next, we aim to show that \( \mathcal{L}_\mathbb{Q} \) is big. According to Corollary \ref{cor1}, we have
\[
\log \# \{ s \in H^0(\mathcal{X}, k \mathcal{L}) \mid  \|s\|_{\sup, {k\overline{\mathcal{L}}}} < 1 \} \leq -n_k \log \lambda_1(\overline{H^0(\mathcal{X}, k\mathcal{L})}_{\sup, k\overline{\mathcal{L}}}) + O(n_k \log n_k).
\]
Combining this with our earlier inequality \eqref{c}, we obtain
\[
ck^{n+1} \leq -k n_k \log \lambda_1(\overline{H^0(\mathcal{X}, k\mathcal{L})}_{\sup, k\overline{\mathcal{L}}})^{\frac{1}{k}} + O(n_k \log n_k).
\]
From this, we can infer that
\[
\frac{c}{-\log \lambda_1(\overline{H^0(\mathcal{X}, k\mathcal{L})}_{\sup, k\overline{\mathcal{L}}})^{\frac{1}{k}}} \leq  \liminf_{k \to \infty} \frac{n_k}{k^n}+O(\frac{\log k}{k}).
\]
Then
\[
0< \liminf_{k \to \infty} \frac{n_k}{k^n}.
\]
This shows that \( \mathcal{L}_\mathbb{Q} \) is indeed big.

\begin{remark}

Note that, as explained in \cite[p.~446]{Moriwaki2}, the proof that $\mathcal{L}_\mathbb{Q}$ is big under the assumption $\widehat{\mathrm{vol}}(\overline{\mathcal{L}}) > 0$ relies on \cite[Theorem~4.4]{Moriwaki2}, which in turn is based on the main technical result of the paper, namely \cite[Theorem~3.1]{Moriwaki2}.

\end{remark}


\bibliographystyle{plain} 

\bibliography{biblio}

\begin{thebibliography}{10}

\bibitem{Bana}
W.~Banaszczyk.
\newblock New bounds in some transference theorems in the geometry of numbers.
\newblock {\em Math. Ann.}, 296(4):625--635, 1993.

\bibitem{BostTheta}
Jean-Beno\^{\i}t Bost.
\newblock {\em Theta invariants of {E}uclidean lattices and
  infinite-dimensional {H}ermitian vector bundles over arithmetic curves},
  volume 334 of {\em Progress in Mathematics}.
\newblock Birkh\"{a}user/Springer, Cham, [2020] \copyright 2020.

\bibitem{Boucksom-lattice}
S{\'e}bastien Boucksom.
\newblock Successive minima and lattice points (after {H}enk, {G}illet and
  {S}oul\'e).
\newblock {\em Unpublished note (2011)}, 2011.

\bibitem{Bourgain-Milman}
J.~Bourgain and V.~D. Milman.
\newblock New volume ratio properties for convex symmetric bodies in {${\bf
  R}^n$}.
\newblock {\em Invent. Math.}, 88(2):319--340, 1987.

\bibitem{Burgos3}
Jos{\'e}~Ignacio Burgos~Gil, Atsushi Moriwaki, Patrice Philippon, and
  Mart{\'{\i}}n Sombra.
\newblock Arithmetic positivity on toric varieties.
\newblock {\em J. Algebraic Geom.}, 25(2):201--272, 2016.

\bibitem{Burgos2}
Jos{\'e}~Ignacio Burgos~Gil, Patrice Philippon, and Mart{\'{\i}}n Sombra.
\newblock Arithmetic geometry of toric varieties. {M}etrics, measures and
  heights.
\newblock {\em Ast\'erisque}, (360):vi+222, 2014.

\bibitem{Conway}
J.~H. Conway and N.~J.~A. Sloane.
\newblock {\em Sphere packings, lattices and groups}, volume 290 of {\em
  Grundlehren der mathematischen Wissenschaften [Fundamental Principles of
  Mathematical Sciences]}.
\newblock Springer-Verlag, New York, third edition, 1999.
\newblock With additional contributions by E. Bannai, R. E. Borcherds, J.
  Leech, S. P. Norton, A. M. Odlyzko, R. A. Parker, L. Queen and B. B. Venkov.

\bibitem{Fulton}
William Fulton.
\newblock {\em Introduction to toric varieties}, volume 131 of {\em Annals of
  Mathematics Studies}.
\newblock Princeton University Press, Princeton, NJ, 1993.
\newblock The William H. Roever Lectures in Geometry.

\bibitem{Gaudron2013}
\'Eric Gaudron and Ga\"el R\'emond.
\newblock Minima, pentes et alg\`ebre tensorielle.
\newblock {\em Israel J. Math.}, 195(2):565--591, 2013.

\bibitem{Soule-lattice}
H.~Gillet and C.~Soul\'{e}.
\newblock On the number of lattice points in convex symmetric bodies and their
  duals.
\newblock {\em Israel J. Math.}, 74(2-3):347--357, 1991.

\bibitem{Grayson1984}
Daniel~R. Grayson.
\newblock Reduction theory using semistability.
\newblock {\em Comment. Math. Helv.}, 59(4):600--634, 1984.

\bibitem{Gruber}
P.~M. Gruber and C.~G. Lekkerkerker.
\newblock {\em Geometry of numbers}, volume~37 of {\em North-Holland
  Mathematical Library}.
\newblock North-Holland Publishing Co., Amsterdam, second edition, 1987.

\bibitem{MounirKJM}
Mounir Hajli.
\newblock Sur le volume arithm\'etique sur les sch\'emas torique lisses.
\newblock {\em Kyoto J. Math.}, 54(4):819--840, 2014.

\bibitem{Henk}
Martin Henk.
\newblock Successive minima and lattice points.
\newblock Number~70, pages 377--384. 2002.
\newblock IV International Conference in ``Stochastic Geometry, Convex Bodies,
  Empirical Measures $\&$ Applications to Engineering Science'', Vol. I
  (Tropea, 2001).

\bibitem{Hermite1850}
C.~Hermite.
\newblock Extraits de lettres de {M}. {C}h. {H}ermite \`a{} {M}. {J}acobi sur
  diff\'erents objects de la th\'eorie des nombres. ({C}ontinuation).
\newblock {\em J. Reine Angew. Math.}, 40:279--315, 1850.

\bibitem{LLS}
J.~C. Lagarias, H.~W. Lenstra, Jr., and C.-P. Schnorr.
\newblock Korkin-{Z}olotarev bases and successive minima of a lattice and its
  reciprocal lattice.
\newblock {\em Combinatorica}, 10(4):333--348, 1990.

\bibitem{LLL}
A.~K. Lenstra, H.~W. Lenstra, Jr., and L.~Lov\'asz.
\newblock Factoring polynomials with rational coefficients.
\newblock {\em Math. Ann.}, 261(4):515--534, 1982.

\bibitem{MG2002}
Daniele Micciancio and Shafi Goldwasser.
\newblock {\em Complexity of lattice problems}, volume 671 of {\em The Kluwer
  International Series in Engineering and Computer Science}.
\newblock Kluwer Academic Publishers, Boston, MA, 2002.
\newblock A cryptographic perspective.

\bibitem{MR2009}
Daniele Micciancio and Oded Regev.
\newblock Lattice-based cryptography.
\newblock In {\em Post-quantum cryptography}, pages 147--191. Springer, Berlin,
  2009.

\bibitem{Moriwaki1}
Atsushi Moriwaki.
\newblock Arithmetic height functions over finitely generated fields.
\newblock {\em Invent. Math.}, 140(1):101--142, 2000.

\bibitem{Moriwaki2}
Atsushi Moriwaki.
\newblock Continuity of volumes on arithmetic varieties.
\newblock {\em J. Algebraic Geom.}, 18(3):407--457, 2009.

\bibitem{Moriwaki}
Atsushi Moriwaki.
\newblock Big arithmetic divisors on the projective spaces over {$\Bbb Z$}.
\newblock {\em Kyoto J. Math.}, 51(3):503--534, 2011.

\bibitem{Oda}
Tadao Oda.
\newblock Convex bodies and algebraic geometry---toric varieties and
  applications. {I}.
\newblock In {\em Algebraic {G}eometry {S}eminar ({S}ingapore, 1987)}, pages
  89--94. World Sci. Publishing, Singapore, 1988.

\bibitem{Soule}
C.~Soul{\'e}.
\newblock {\em Lectures on {A}rakelov geometry}, volume~33 of {\em Cambridge
  Studies in Advanced Mathematics}.
\newblock Cambridge University Press, Cambridge, 1992.
\newblock With the collaboration of D. Abramovich, J.-F. Burnol and J. Kramer.

\bibitem{Stuhler1976}
Ulrich Stuhler.
\newblock Eine {B}emerkung zur {R}eduktionstheorie quadratischer {F}ormen.
\newblock {\em Arch. Math. (Basel)}, 27(6):604--610, 1976.

\bibitem{YuanInventiones}
Xinyi Yuan.
\newblock Big line bundles over arithmetic varieties.
\newblock {\em Invent. Math.}, 173(3):603--649, 2008.

\bibitem{ZhangPositive}
Shouwu Zhang.
\newblock Positive line bundles on arithmetic varieties.
\newblock {\em J. Amer. Math. Soc.}, 8(1):187--221, 1995.

\end{thebibliography}

\end{document}